\title[Upper bounds for hitting times]{Upper bounds for hitting times\\ of random walks on sparse graphs}
\author{Dmitri V. Fomin}
\address{Dmitri V. Fomin, NanoSemi Inc., Waltham, MA 02452, USA}
\email{fomin@hotmail.com}
\date{\today}
\keywords{random walk, finite connected graph, sparse graph, hitting time, s-t connectivity}
\subjclass[2010]{Primary 05C81, Secondary 60J10, 68R10} 
\newenvironment{proofL}[1][Proof]{\begin{proof}[#1]}{\end{proof}}
\newenvironment{proofP}[1][Proof]{\begin{proof}[#1]}{\end{proof}}
\newtheorem{mthm}{Theorem}[section]
\newtheorem{mlemma}[mthm]{Lemma}
\newtheorem{mcrl}[mthm]{Corollary}
\newtheorem{mprop}[mthm]{Proposition}
\newcommand{\dist}{\mathrm{d}}
\newcommand{\gte}{\geqslant}
\newcommand{\lte}{\leqslant}
\newcommand{\xxx}{\begin{center}*\quad *\quad *\end{center}}
\newcommand{\vvv}{\vspace{5pt}}
\newcommand{\nn}{\nonumber}
\newcommand{\nncr}{\nonumber\\}
\newcommand{\mch}{\mathcal{H}}
\newcommand{\mcl}{\mathcal{L}}
\newcommand{\mcp}{\mathcal{P}}
\newcommand{\mcf}{\mathcal{F}}
\newcommand{\mcn}{\mathcal{N}}
\newcommand{\mcc}{\mathcal{C}}
\newcommand{\mca}{\mathcal{A}\mathbf{1}}
\newcommand{\mcv}{\mathcal{V}}
\newcommand{\mce}{\mathcal{E}}
\newcommand{\mbp}{\mathbb{P}}
\newcommand{\mbv}{\mathbb{V}}
\newcommand{\mbr}{\mathbb{R}}
\newcommand{\mbs}{\mathbb{S}}
\newcommand{\vphfpp}{\vphantom{\frac{p_{YX}}{p_{YZ}}}}
\newcommand{\dmhs}{\,\,}
\newcommand{\edge}[1]{\left[{#1}\right]}
\newcommand{\dir}[1]{\overrightarrow{#1}}
\newcommand{\msection}[1]{\vspace{15pt}\section{#1}\vspace{5pt}}
\begin{document}


\begin{abstract}

We obtain upper bounds (in most cases, sharp) for the hitting times of random walks on finite undirected graphs.  In particular, we show that the maximum hitting time for a simple random walk on a connected graph with $m$ edges is at most~$m^2$.  Similar bounds are given for the settings involving arbitrary edge-weight and edge-cost functions.

Upper bounds of this type are especially useful for sparse graphs. 

\end{abstract}

\maketitle



\msection{Introduction and main results}

Let $G=(\mcv, \mce)$ be a finite undirected connected simple graph with $m$ edges.  Consider a random walk on~$G$, i.e., a Markov chain whose states are the vertices in~$\mcv$ and whose transitions are restricted to the edges in~$\mce$. The \textit{hitting time} $\mch(x, y)$ associated to a pair of vertices $x,y\in\mcv$ is the expected time that the random walk originating at~$x$ takes to reach~$y$ (for the first time). 

\begin{mthm}
\label{th:simple}
For a simple random walk on $G$, and any $x, y \in \mcv$, we have
$$
\mch(x, y) \lte m^2 - (m-d)^2,
$$
where $d=\dist(x,y)$ is the distance between $x$ and~$y$.
\end{mthm}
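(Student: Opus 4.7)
The plan is to invoke the electrical-network formulation of simple random walks. Put unit resistance on each edge of $G$, and let $V(z) := V_{x,y}(z)$ denote the electric potential at $z$ when unit current is injected at $x$, extracted at $y$, with $V(y)=0$. A standard Green-function computation gives the starting identity
\[
\mch(x,y) \;=\; \sum_{z \in \mcv} \deg(z)\, V(z).
\]

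The first step is to bound each voltage pointwise. The maximum principle gives $V(z) \lte V(x) = R(x,y)$; and the probabilistic interpretation $V(z) = R(x,y) \cdot \mbp_z[\text{reach } x \text{ before } y]$, combined with the fact that effective resistance is a metric, yields the complementary estimate $V(z) \lte R(z,y)$. Since resistance is dominated by graph distance, we obtain the key pointwise bound
\[
V(z) \;\lte\; \min\bigl(d,\, \dist(z,y)\bigr).
\]

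The second step is an algebraic rearrangement. Partition $\mcv$ into BFS levels $L_i := \{z : \dist(z,y) = i\}$ and set $\operatorname{vol}(L_i) := \sum_{z \in L_i} \deg(z)$, so that $\sum_i \operatorname{vol}(L_i) = 2m$. A short computation then gives
\[
\sum_z \deg(z)\, \min\bigl(d,\, \dist(z,y)\bigr) \;=\; 2md \;-\; \sum_{i=0}^{d-1}(d-i)\,\operatorname{vol}(L_i).
\]

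The third step --- the one I expect to be the main obstacle --- is the combinatorial inequality $\sum_{i=0}^{d-1}(d - i)\,\operatorname{vol}(L_i) \gte d^2$. I would prove it by accounting for the $d$ edges of a geodesic $x = v_0, v_1, \ldots, v_d = y$. Since consecutive geodesic vertices lie in consecutive BFS levels (specifically $v_k \in L_{d-k}$), the edge $\{v_k, v_{k+1}\}$ contributes $k + (k+1) = 2k+1$ to the weighted sum, once we recognize that its endpoint $v_0 \in L_d$ carries weight $0$. Summing gives $\sum_{k=0}^{d-1}(2k+1) = d^{2}$, and any further edges of $G$ only add non-negative contributions, establishing the inequality.

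Combining the three steps yields $\mch(x,y) \lte 2md - d^2 = m^2 - (m-d)^2$, as required. The equality case (e.g.\ $G$ a path, $x,y$ its endpoints) is exactly when both $V(z) = \dist(z,y)$ throughout and the only cross-level edges are those of the geodesic.
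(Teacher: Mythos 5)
Your proposal is correct, but it takes a genuinely different route from the paper. The paper proves this bound by induction on the number of edges: it converts the walk into one with absorbing vertex $a=y$, shows that deleting an edge incident to $a$ (while keeping $x$ connected to $a$) cannot decrease the hitting time (a monotonicity lemma proved via the nonsingular normalized Laplacian and an absorption-probability vector), and uses the identity $\sum_{z\in\mcn(a)}\mch(z)=2m-\deg(a)$ to handle the case where $a$ is pendant, peeling off the last vertex of a geodesic at each step. You instead work with the electric network with unit resistances: the Green-function/reciprocity identity $\mch(x,y)=\sum_z\deg(z)V(z)$ (Tetali-type formula), the pointwise voltage bounds $V(z)\lte R(x,y)\lte d$ (maximum principle) and $V(z)\lte R(z,y)\lte\dist(z,y)$ (via $V(z)=\tfrac12\bigl(R(x,y)+R(z,y)-R(x,z)\bigr)$ and the triangle inequality for effective resistance, which is the content of your "probabilistic interpretation plus resistance metric" step and deserves to be spelled out), and finally the BFS-level inequality $\sum_{i=0}^{d-1}(d-i)\,\mathrm{vol}(L_i)\gte d^2$, which your geodesic-edge accounting establishes correctly. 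All steps check out, and $2md-d^2=m^2-(m-d)^2$ closes the argument. What each approach buys: yours is short, non-inductive, and makes the source of the bound transparent (each edge contributes at most its capped distance weight, which is close in spirit to the paper's later estimate $h_{x,e}\lte 2\dist_y(e)+1$ in its edge-cost theorem), but it leans on heavier standard machinery (Green-function symmetry, resistance as a metric); the paper's induction is more elementary and self-contained, and its deletion lemma and edge-counting identity are exactly the tools it later reuses and generalizes to edge-cost functions and asymmetric walks, where your unit-resistance identity does not carry over verbatim.
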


Theorem~\ref{th:simple} has the following direct corollary. 

\begin{mthm}
\label{thm:sg1i}
For a simple random walk, the maximum hitting time
$\mch(G)\!=\!\displaystyle\max_{x,y\in\mcv}\mch(x, y)$
satisfies $\mch(G) \lte m^2$, with equality reached only for a path graph with $m$ edges.
\end{mthm}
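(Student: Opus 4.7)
The plan is to deduce Theorem~\ref{thm:sg1i} from Theorem~\ref{th:simple} in three short steps: the upper bound, the characterization of equality, and a verification that the path actually attains $m^2$.

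\textbf{Upper bound.} For any $x,y\in\mcv$, let $d=\dist(x,y)$. Because $G$ is connected with $n$ vertices we have $d\lte n-1\lte m$, so $(m-d)^2\gte 0$. Plugging into Theorem~\ref{th:simple} gives
$$
\mch(x,y)\lte m^2-(m-d)^2\lte m^2,
$$
and taking the maximum over $x,y$ yields $\mch(G)\lte m^2$.

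\textbf{Characterization of equality.} Suppose $\mch(G)=m^2$, and pick $x,y$ realizing this maximum. Then Theorem~\ref{th:simple} forces $(m-d)^2=0$, i.e.\ $d=m$. Combined with the chain $d\lte n-1\lte m$, this gives both $m=n-1$ (so $G$ is a tree) and $\dist(x,y)=n-1$. The only tree on $n$ vertices admitting a pair of vertices at distance $n-1$ is the path $P_n$ with $x,y$ at its two endpoints, so $G$ must be the path graph with $m$ edges.

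\textbf{Converse.} It remains to check that the path $P_n$ with $m=n-1$ edges actually achieves $\mch(G)=m^2$. Labelling the vertices $0,1,\ldots,m$ along the path, the classical one-dimensional gambler's-ruin calculation (or a short induction using $\mch(k,k+1)=2k+1$ on a path) gives $\mch(0,m)=\sum_{k=0}^{m-1}(2k+1)=m^2$, so the bound is tight.

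\textbf{Where the work is.} Given Theorem~\ref{th:simple}, there is essentially no obstacle: the only real content is the observation $d\lte n-1\lte m$ used twice (once for the bound, once for the rigidity) and the elementary evaluation of the hitting time on $P_n$. If anything warrants care, it is the uniqueness argument in the equality case, where one must explicitly note that a tree with two vertices at distance $n-1$ is forced to be a path, leaving no room for extra branches.
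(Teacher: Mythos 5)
Your proposal is correct and follows essentially the same route as the paper: Theorem~\ref{thm:sg1i} is obtained there as a direct corollary of the bound $\mch(x,y)\lte m^2-(m-d)^2$, with sharpness coming from the explicit computation $h_m=m^2$ on the path graph done in Section~3. Your explicit rigidity step (equality forces $d=m=n-1$, hence a tree whose diameter path exhausts all vertices, hence $P_m$) just fills in the detail the paper treats as obvious, so there is nothing to add.
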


These results generalize to asymmetric random walks (i.e., random walks on edge-weighted graphs) and to random walks with edge-cost functions. 

An \textit{edge-weight function} on $G$ is a mapping $w:\mcv\times\mcv\rightarrow \mbr$ such that $w(x,y)>0$ if $x$ and~$y$ are adjacent, and $w(x,y)=0$ otherwise.  Such a function defines a random walk on $G$ with transition probabilities
$$
p_{xy} = \frac{w(x,y)}{\sum_{z\in\mcv}w(x,z)}. 
$$

Let $\mcn(x)$ denote the set of vertices adjacent to $x\in \mcv$. The \textit{asymmetry}, or \textit{transitional bias}, of~$w$  (and of the corresponding edge-weighted graph and random walk) is defined by  
$$
\tau_w = \max_{x\in\mcv}\max_{y, z\in\mcn(x)}\frac{w(x,y)}{w(x,z)}. 
$$
Obviously $\tau_w=1$ if and only if $w$ is constant on $\mce$, or equivalently the random walk is simple. 

An \emph{edge-cost function} is a function $f:\mce \rightarrow \mbr$. The associated \emph{cost} of a finite walk in $G$ is defined as the sum of costs of all its edges (with multiplicities). If $f(e)=1$ for every edge~$e$, then the cost of a walk is equal to its length. 

The hitting time $\mch^f(x,y) = \mch^f_w(x, y)$ relative to the edge-cost function~$f$ is defined as the expected value of the cost of a random walk starting at~$x$ and stopping at~$y$.  

Our main result is the following generalization of Theorem~\ref{thm:sg1i}. 

\begin{mthm}
Let $f$ be a nonnegative edge-cost function. Then for any $x, y\in\mcv$, we have
$$
\mch^f_w(x, y) \lte \sum_{e\in \mce} \Bigl(1+2\sum_{i=1}^{\dist_y(e)}\tau_w^i \Bigr)\cdot f(e), 
$$
where $\dist_y(e)$ is the minimum distance between $y$ and the endpoints of~$e$.
\end{mthm}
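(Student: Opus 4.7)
The plan is to reduce the theorem to a per-edge estimate. Writing $N(e)$ for the expected number of times the random walk from $x$ traverses the edge $e$ (in either direction) before hitting $y$, linearity of expectation gives $\mch^f_w(x,y) = \sum_{e\in\mce} N(e)\,f(e)$, so since $f\ge 0$ is arbitrary, the theorem is equivalent to the pointwise bound
\[
    N(e) \;\le\; 1 + 2\sum_{i=1}^{\dist_y(e)}\tau_w^{\,i} \qquad (e \in \mce). \tag{$\star$}
\]
The biased random walk on a finite path (solve the first-passage recursion $g_j = (1+\tau_w\,g_{j+1})/q$) shows that $(\star)$ is tight and in fact splits cleanly into per-direction inequalities: writing $q(\cdot\to\cdot)$ for expected directed traversals, the path yields $q(v\to u) = 1+\sum_{i=1}^{\dist_y(e)}\tau_w^{\,i}$ for the direction towards $y$ and $q(u\to v) = \sum_{i=1}^{\dist_y(e)}\tau_w^{\,i}$ for the direction away. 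This is the template to emulate.

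I would prove the general bound by induction on $k := \dist_y(e)$, using the BFS layers $L_j = \{v\in\mcv:\dist(v,y)=j\}$. The base case $k=0$ is immediate, since the walk terminates at $y$. For the inductive step take $e=[u,v]$ with $\dist(u,y)=k$ and $\dist(v,y)\in\{k,k+1\}$; the per-direction bound on the upward arc $u\to v$ is the easy half. Choose a down-neighbor $u^{\ast}\in L_{k-1}\cap\mcn(u)$; the vertex-level asymmetry $p_{uv}\le\tau_w\,p_{uu^{\ast}}$ yields
\[
    q(u\to v) \;\le\; \tau_w\,q(u\to u^{\ast}) \;\le\; \tau_w\bigl(1 + \textstyle\sum_{i=1}^{k-1}\tau_w^{\,i}\bigr) \;=\; \textstyle\sum_{i=1}^{k}\tau_w^{\,i},
\]
where the middle step is the inductive downward bound on $[u,u^{\ast}]$.

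The main obstacle is the reverse inequality $q(v\to u)\le 1+\sum_{i=1}^{k}\tau_w^{\,i}$ in the harder case $\dist(v,y)=k+1$: then $u$ is \emph{already} the down-neighbor of $v$, and the asymmetry at $v$ only bounds $q(v\to u)$ in terms of arcs at the same or higher layer, so the edge-by-edge induction does not close naively. I would attack this by a flow argument at $v$: writing $\eta(v) = \mathbf{1}\{v=x\} + \sum_w q(w\to v)$ and splitting the incoming traffic by layer, the arrivals out of $L_k$ are controlled by the upward bound just established at level $k$, while contributions from $L_{k+1}$ (lateral) and $L_{k+2}$ (downward-from-above) should be handled either by strengthening the induction to track lateral traffic or by summing across each layer and invoking the conservation identity $D_{k+1}-U_{k+1} = \mathbf{1}\{\dist(x,y)\ge k+1\}$ for the total downward and upward crossings. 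The extra factor $\tau_w^{\,k}$ appearing in $(\star)$ at each new layer should emerge from applying the asymmetry exactly once per layer-crossing, reproducing the geometric series of the biased-path template.
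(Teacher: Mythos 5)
Your opening reduction is exactly the paper's: the quantity you call $N(e)$ is the paper's $\mbs_e(x)$, and $1+2\sum_{i=1}^{d}\tau_w^i$ is its $\mcp(\tau_w,d+1)$, so the theorem is indeed equivalent to the per-edge bound $(\star)$. Your ``easy half'' is also sound, since $q(u\to v)=p_{uv}\,\mbs_u(x)$ and $q(u\to u^{\ast})=p_{uu^{\ast}}\,\mbs_u(x)$, so the asymmetry at $u$ transfers a bound on the downward arc to the upward arc. But the downward (towards-$y$) bound $q(v\to u)=p_{vu}\,\mbs_v\lte 1+\sum_{i=1}^{k}\tau_w^i$ is precisely where the argument stops being a proof, and the devices you sketch will not close it. Bounding $\eta(v)=\mbs_v$ by splitting incoming traffic over layers attacks the wrong quantity: what must be controlled is the product $p_{vu}\,\mbs_v$, and $\mbs_v$ alone can vastly exceed $1+\sum_{i=1}^{k}\tau_w^i$ (already for the simple walk, a high-degree vertex hanging off a long path has $\mbs_v$ of order $\deg(v)\cdot\dist_a(v)$, while the per-edge target is $2\dist_a(e)+1$); moreover the lateral and from-above arrivals are exactly the terms your induction does not yet control, as you concede. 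The cut-conservation identity $D_{k+1}-U_{k+1}\in\{0,1\}$ only constrains layer aggregates, whereas the theorem (take $f$ supported on one edge) demands an individual-edge bound, so ``summing across each layer'' cannot substitute for the missing estimate.

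The ingredient you are missing is the paper's Lemma \ref{lem:asb_ineq}: for every vertex $v\neq a$ with $d=\dist_a(v)$ and any edge $\edge{v,u}$ with $\dist_a(u)<d$, one has $\mbs_v\lte \frac1{p_{vu}}(1+\tau+\dots+\tau^{d-1})$; multiplying by $p_{vu}$ gives your hard directional bound, and the two-case analysis at the endpoints of $e$ (one endpoint farther, or both equidistant) then reproduces $(\star)$ exactly as in the paper. Crucially, that lemma is not proved by a local flow estimate at $v$ but globally, by induction on $d$: using the deletion-monotonicity argument of Lemma \ref{lem:asg1_1} one strips all edges at $a$ except the last edge $\edge{a,a_1}$ of a shortest path, making $a$ pendant; then summing the weighted differences $w(u,v)\bigl(\mbs_x(u)-\mbs_x(v)\bigr)$ over all directed edges yields the exact value $\mbs_x(a_1)=W(x)/w(a,a_1)$, which is at most $\tau^{d-1}/q$ by comparing weights along a path joining $\edge{x,y}$ to $\edge{a,a_1}$, and the decomposition $\mbs_x(v,a)=\mbs_x(a_1,a)+\mbs^*_x(v,a_1)$ in $G\backslash a$ completes the induction. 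Without something playing this role, your ``harder case'' remains an unproven conjecture, so the proposal as written has a genuine gap.
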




\msection{Definitions, notation and some basic facts}

This section will repeat some definitions from the previous section, with necessary expansion and clarification. 

We will consider a finite undirected connected simple graph $G = (\mcv, \mce)$ (where $\mcv$ is the set of vertices of $G$ and $\mce$ is the set of edges of $G$) which has $n$ vertices and $m$ edges. Undirected edge connecting vertices $u$ and $v$ will be denoted as $\edge{u,v}$ and directed edge -- as $\dir{uv}$.

Set of all neighbors (adjacent vertices) of vertex $x$ in graph $G$ will be denoted as $\mcn_G(x)$ or simply as $\mcn(x)$. 

A generalized distance function in graph $G$ will be denoted as $\dist(A,B)$ where $A$ and $B$ are  either subgraphs of $G$ or arbitrary sets of vertices/edges of $G$. It is defined as a minimum distance between vertices that belong to objects of set $A$ and vertices that belong to objects from set $B$. For instance, if we have edge $e=\edge{x,y}$ and vertex $z$ then $\dist(e, z) = \min(\mathrm{dist}(x,z), \mathrm{dist}(y,z))$. We will also sometimes use notation $\dist_A(B)$ for the same expression.

\vvv

\textit{Edge-weight} function on graph $G$ is a mapping $w: \mcv\times\mcv \rightarrow \mbr$ that is positive on pairs of adjacent vertices and zero otherwise. A most common example is a unit function, that is, $w(u,v)= 1$ iff $\edge{u,v}\in\mce$. Pair $(G, w)$ is called an \textit{edge-weighted graph} (or an \textit{electric network} if weights are treated as edge conductances) and it can be used to define a random walk on $G$ as a time-homogeneous \textit{Markov chain} with $G$ as its state space, and with transition probabilities set by formulas
$$
p_{xy} = \frac{w(x,y)}{\sum_{z\in\mcn(x)}w(x,z)}\quad \textrm{if } \edge{x,y}\in \mce\ ; \quad p_{xy} = 0\quad \textrm{if } \edge{x,y}\notin \mce
$$
for any $x, y\in \mcv$. This means that at each step of the walk we "choose" transition edge with probability proportional to the edge's weight.

See \cite{DoyleSnell}, \cite{AldousFill}, \cite{LyonsPeres} for more formal definition of random walks on graphs, Markov chains and electric networks.

Among random walks on finite graphs \textit{simple} or \textit{symmetric} random walk (defined by edge-weight function constant on $\mce$) is the most commonly used. For this walk, if vertex $x$ has degree $k$ then $p_{xy}= 1/k$ for any adjacent vertex $y$.

Most texts on graphs and probability only consider random walks which are constructed from an edge-weight function. In this article, however, we will sometimes use "generalized" random walks with absorbing vertex which are not, strictly speaking, random walks on graphs (such a walk is still, however, a finite time-homogeneous Markov chain). Namely, a Markov chain obtained from a random walk by declaring exactly one vertex $a$ as \textit{absorbing} and setting probabilities for all transitions out of $a$ according to formula $p_{ay} = \delta_{ay}$ ($\delta$ is Kronecker's delta function) will be called $\mca$-walk. In this case we will set $\mcn(a)=\emptyset$.

Any "standard" random walk is in fact a time-reversible \textit{Markov chain} (\cite{Doob}, \cite{AldousFill}) with graph $G$ being its diagram where each edge of $G$ represents two oppositely oriented transitions. This Markov chain is \textit{strongly connected}, meaning that you can reach any vertex $y$ from any other vertex $x$. An $\mca$-walk is "almost" strongly connected -- you can reach any vertex $y$ from any vertex $x$ unless $x=a$. 

We will call average (that is, expected value of) length of walk's paths that begin at some vertex $x$ and end as soon as they reach $a$, \textit{hitting time} for vertex $x$, and will denote it as $\mch_G(x, a)$ or simply $\mch(x, a)$. When it will not end in confusion we will use an even simpler notation $\mch_G(x)$ or $\mch(x)$. This is also sometimes called \textit{access time} or \textit{absorption time} since it is the expected value of the time it will take for the walker to reach (and therefore be "absorbed" by) vertex $a$. 

For most of the facts (theorems, lemmas etc.) about hitting time in this article we can switch between $\mca$-walks and "standard" random walks. Obviously, computing $\mch(x, a)$ for a "regular" random walk $R$ is the same as computing it for the $\mca$-walk obtained from $R$ by declaring vertex $a$ absorbing and changing probabilities for transitions out of $a$ accordingly.

Treating vertex $a$ as the fixed absorbing state will help us to simplify some proofs. Therefore we will often convert a given random walk into an $\mca$-walk and consider vertex $a$ an absorbing vertex, and in all figures in this article we will distinguish that vertex by drawing a circle around it. 

\vvv

\begin{mprop}
For any $\mca$-walk numbers $h_x = \mch(x)$ satisfy the following system of linear equations
\begin{align}
\begin{cases}
\label{eq:maineq_graph}
h_x - \displaystyle{\sum_{y \in \mcv}p_{xy}\cdot h_y} &= 1,\quad x \neq a \\
h_a                                                     &= 0
\end{cases}
\end{align}
\end{mprop}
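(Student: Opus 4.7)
The proof will proceed by a standard first-step analysis argument.

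The plan is to split into two cases according to whether $x=a$ or not. When $x=a$, the walk is already at the absorbing state, so by the definition of hitting time as the expected number of steps to reach $a$, we immediately get $h_a=0$. This case is essentially tautological given the definition of an $\mca$-walk.

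For $x\neq a$, I would condition on the outcome of the first transition. From $x$, the walker takes exactly one step (contributing $1$ to the path length), moving to some neighbor $y$ with probability $p_{xy}$, after which the remaining expected time to absorption is $h_y$ by the Markov property (time-homogeneity of the chain ensures this). Summing over the possible first moves gives
\[
h_x = 1 + \sum_{y\in\mcv} p_{xy}\cdot h_y,
\]
which rearranges to the stated identity. The computation uses that $\sum_{y} p_{xy} = 1$ for $x\neq a$, and treats $h_a=0$ consistently on the right-hand side (terms with $y=a$ simply drop out).

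The only real subtlety is verifying that $h_x$ is finite so that the conditional-expectation manipulation is legitimate; without finiteness, the expression $1 + \sum_y p_{xy} h_y$ might be meaningless or involve indeterminate forms. Since the underlying random walk on the finite connected graph $G$ is strongly connected, and since adding the absorbing vertex $a$ does not destroy reachability of $a$ from any $x\neq a$, the absorption time is dominated by a geometric random variable (there is a uniform lower bound on the probability of reaching $a$ within $n$ steps from any non-absorbing state), and hence has finite expectation. Once this is in hand, the interchange of expectation and summation is justified by nonnegativity (monotone convergence or Tonelli), and the linear system follows.

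I expect no real obstacles beyond noting this finiteness point; the result is the standard harmonic-equation characterization of expected hitting times for a finite absorbing Markov chain, and the author is presumably stating it in this form to reference it later when proving the main upper bounds.
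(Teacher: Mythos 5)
Your proposal is correct and matches the paper's own argument, which likewise derives the system by first-step analysis (every path from $x\neq a$ begins with a transition to a neighbor $y$ with probability $p_{xy}$, and $\mch(a)=0$ handles the absorbing case). Your extra remark on finiteness of the hitting times is a sound addition that the paper leaves implicit.
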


\begin{proofP}

Let us consider matrix $\mathbf{L}$ with elements $L_{xy} = \delta_{xy} - (1-\delta_{xa})p_{xy}$. Then system \ref{eq:maineq_graph} is equivalent to $\mathbf{L}\cdot\mathbf{h} = \{1-\delta_{xa}\}$. In other words, matrix $\mathbf{L}$ maps vector $\mathbf{h} = \{h_x\}$ into vector $\{1, 1, 1, \ldots, 0\}$ where all vector coordinates are 1, except for the one indexed by $a$ which equals 0; for ease of notation we assume that $a$ is the last indexed vertex. This matrix $\mathbf{L}$ is a so-called \textit{normalized Laplacian matrix} of a random walk (see \cite{Spielman}).

Proof immediately follows from the fact that any path to $a$ that starts in $x$ begins with a transition to one of its neighbors $y$ with probability $p_{xy}$. Vertex $a$ is the only exception since there are no transitions out of $a$, and $\mch(a)=0$. 
\end{proofP}

\begin{mprop}
\label{prop:mnonsing}
For any $\mca$-walk on $G$ matrix $\mathbf{L}$ is nonsingular.
\end{mprop}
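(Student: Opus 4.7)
The plan is to show that $\ker\mathbf{L} = \{0\}$ by invoking a discrete maximum principle for harmonic functions on $G$.

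Suppose $\mathbf{v} = \{v_x\}_{x\in\mcv}$ satisfies $\mathbf{L}\mathbf{v} = 0$. Reading off the row indexed by $a$ immediately gives $v_a = 0$, and for each $x \neq a$ the equation reads $v_x = \sum_{y\in\mcv} p_{xy}\, v_y$, which presents $v_x$ as a convex combination of the values of $\mathbf{v}$ on its neighbors (since $\sum_y p_{xy}=1$ and $p_{xy}>0$ exactly for $y\in\mcn(x)$).

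Next, I would apply the maximum principle. Let $M = \max_{x\in\mcv}v_x$ and $S = \{x : v_x = M\}$. If $M > 0$ then $v_a = 0$ forces $a \notin S$; picking any $x\in S$, the convex-combination identity at $x$ forces every $v_y$ with $y\in\mcn(x)$ to equal $M$ as well, since otherwise the weighted average would be strictly less than $M$. Hence $\mcn(x)\subseteq S$. Iterating this propagation along paths in $G$ and using connectedness, I deduce $S = \mcv$, contradicting $v_a = 0$. Applying the same argument to $-\mathbf{v}$ gives $\min_x v_x \gte 0$. Therefore $\mathbf{v} = 0$, so $\mathbf{L}$ is injective and, being square, nonsingular.

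The one subtlety is that in the $\mca$-walk we have $\mcn(a) = \emptyset$, so there is no harmonic identity at $a$ itself to continue the propagation. This is not actually an obstacle, since we only need to propagate the maximum \emph{into} $a$: connectedness of $G$ as an undirected graph provides a path from any $x\in S$ to $a$ whose intermediate vertices are all distinct from $a$ and therefore still satisfy the harmonic identity. An alternative plan would write $\mathbf{L} = I - \tilde{P}$, where $\tilde{P}$ is $P$ with its $a$-row zeroed out, and exhibit the inverse as the Neumann series $\sum_{k\gte 0}\tilde{P}^k$, whose convergence is forced by almost-sure absorption of the walk at $a$ (guaranteed by connectedness and finiteness of $G$); but the maximum-principle route is shorter, and that is the one I would take.
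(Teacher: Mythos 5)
Your proof is correct and takes essentially the same route as the paper: the paper's diagonal-dominance argument is exactly this maximum-principle propagation, carried out on the absolute values $|v_x|$ at a maximizing index and contradicted at a vertex adjacent to $a$, whereas you run it on the signed values (maximum and minimum separately) and propagate the extremum into $a$ itself. Both rest on the same ingredients: the convex-combination identity at every $x \neq a$, connectedness of $G$, and the row giving $v_a = 0$.
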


\begin{proofP}
We will use the fact that matrix $\mathbf{L}$ is diagonally dominant, meaning that in every row absolute value of its diagonal element is greater than or equal to the sum of absolute values of all non-diagonal elements. It is also strictly dominant (inequality mentioned just above is strict) in the last row. Now if $\mathbf{L} = \{L_{ij}\}$ is singular then there exists a non-zero vector $\pmb{x}$ such that $\mathbf{L}\pmb{x} = \pmb{0}$. Since $\pmb{x}$ is non-zero then maximum absolute value of its coordinates $|\pmb{x}|_\infty$ is positive. Assuming that maximum is reached at index $k$ we have $|x_k| = |x|_\infty$ and since $\sum L_{ki}x_i = 0$ it follows that
$$
\left|L_{kk}x_k\right| = \left|\sum_{i\neq k} L_{ki} x_i\right| \lte |x_k|\sum_{i\neq k} |L_{ki}|
\lte |x_k||L_{kk}|
$$

In order for this inequality chain to be valid all inequalities here must be, in fact, equalities. That means that for every index $i$ such that $L_{ki}\neq 0$ (that is, corresponding vertices of the graph are adjacent) we must have $|x_i| = |x_k|$ (and therefore, $x_i = |\pmb{x}|_\infty$). It follows then that all $|x_j|$ are the same because $G$ is connected, which is impossible because that would contradict any of the equations corresponding to a vertex adjacent to $a$.

This means, among other things, that system of linear equations (\ref{eq:maineq_graph}) always has exactly one solution.
\end{proofP}

\xxx

Below you will find a few simple graphs with values of $\mch$ printed next to the vertices (values are computed for simple random walks). Absorbing state vertex $a$ is, as always, circled.

\vvv
\begin{figure}[H]
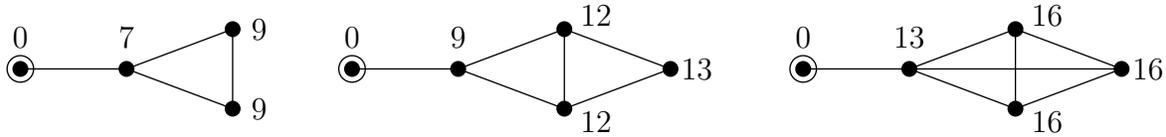

\begin{asy}
// ----------------- graph 1
// VERTICES
real x10 = -25; real y10 = -20;  fill(circle((x10,y10),3));  // 0
real x11 = 15;  real y11 = -20;  fill(circle((x11,y11),3));  // 1
real x12 = 55;  real y12 = -5;   fill(circle((x12,y12),3));  // 2
real x13 = 55;  real y13 = -35;  fill(circle((x13,y13),3));  // 3
draw(circle((x10,y10),5));

// EDGES
draw((x10, y10) -- (x11, y11)); // 0-1
draw((x11, y11) -- (x12, y12)); // 1-2
draw((x11, y11) -- (x13, y13)); // 1-3
draw((x12, y12) -- (x13, y13)); // 2-3

// LABELS
label("0", (x10, y10+12)); // 0
label("7", (x11, y11+12)); // 1
label("9", (x12+10, y12)); // 2
label("9", (x13+10, y13)); // 3

// ----------------- graph 2
// VERTICES
real x20 = 100; real y20 = -20; fill(circle((x20,y20),3)); // 0
real x21 = 140; real y21 = -20; fill(circle((x21,y21),3)); // 1
real x22 = 180; real y22 = -35; fill(circle((x22,y22),3)); // 2
real x23 = 180; real y23 = -5;  fill(circle((x23,y23),3)); // 3
real x24 = 220; real y24 = -20; fill(circle((x24,y24),3)); // 4
draw(circle((x20,y20),5));

// EDGES
draw((x20, y20) -- (x21, y21)); // 0-1
draw((x21, y21) -- (x22, y22)); // 1-2
draw((x21, y21) -- (x23, y23)); // 1-3
draw((x22, y22) -- (x23, y23)); // 2-3
draw((x22, y22) -- (x24, y24)); // 2-4
draw((x23, y23) -- (x24, y24)); // 3-4

// LABELS
label("0", (x20, y20+12));    // 0
label("9", (x21, y21+12));    // 1
label("12", (x22+12, y22-5)); // 2
label("12", (x23+12, y23+5)); // 3
label("13", (x24+10, y24));   // 4

// ----------------- graph 3
// VERTICES
real x30 = 270; real y30 = -20; fill(circle((x30,y30),3)); // 0
real x31 = 310; real y31 = -20; fill(circle((x31,y31),3)); // 1
real x32 = 350; real y32 = -35; fill(circle((x32,y32),3)); // 2
real x33 = 350; real y33 = -5;  fill(circle((x33,y33),3)); // 3
real x34 = 390; real y34 = -20; fill(circle((x34,y34),3)); // 4
draw(circle((x30,y30),5));

// EDGES
draw((x30, y30) -- (x31, y31)); // 0-1
draw((x31, y31) -- (x32, y32)); // 1-2
draw((x31, y31) -- (x33, y33)); // 1-3
draw((x31, y31) -- (x34, y34)); // 1-4
draw((x32, y32) -- (x33, y33)); // 2-3
draw((x32, y32) -- (x34, y34)); // 2-4
draw((x33, y33) -- (x34, y34)); // 3-4

// LABELS
label("0", (x30, y30+12));    // 0
label("13", (x31, y31+12));    // 1
label("16", (x32+12, y32-5)); // 2
label("16", (x33+12, y33+5)); // 3
label("16", (x34+10, y34));   // 4
\end{asy}
\caption{A few simple examples}
\label{fig:simple_ex}
\end{figure}
\vvv

Matrix $\mathbf{L}$ for the first graph in Fig.\ref{fig:simple_ex} looks like this
$$
\begin{bmatrix}
1 & -\frac13 & -\frac13 & -\frac13 \\
-\frac12 & 1 & -\frac12 & 0 \\
-\frac12 & -\frac12 & 1 & 0 \\
0 & 0 & 0 & 1 \\
\end{bmatrix}
$$

The examples above might lead you to suspect that the values of function $\mch$ are always integers in case of a simple random walk. This is true for a tree (we will prove that in the next section) but it is not so for an arbitrary connected graph. Here are some examples of graphs with non-integer values of $\mch$.

\vvv
\begin{figure}[H]
\begin{center}
\begin{asy}
label("0", (-25, -8));
fill(circle((-25,-20),3));
draw((-25, -20) -- (15, -5));
draw(circle((-25,-20),5));

label("$\frac{19}3$", (15, 9));
fill(circle((15,-5),3));

label("$\frac{17}3$", (15, -49));
fill(circle((15,-35),3));
draw((-25, -20) -- (15, -35));
draw((15, -35) -- (15, -5));
draw((15, -35) -- (55, -35));

label("8", (63, 7));
fill(circle((55,-5),3));
draw((15, -5) -- (55, -5));
label("$\frac{23}3$", (63, -49));
fill(circle((55,-35),3));
draw((15, -5) -- (55, -35));
draw((55, -5) -- (55, -35));

// -------------------------------

label("0", (190, -8));
fill(circle((195,-20),3));
draw((195, -20) -- (235, -5));
draw(circle((195,-20),5));

label("$\frac{60}{11}$", (235, 9));
fill(circle((235,-5),3));

label("$\frac{60}{11}$", (223, -59));
fill(circle((235,-55),3));
draw((195, -20) -- (235, -55));
draw((235, -55) -- (315, -55));

label("$\frac{80}{11}$", (323, 9));
fill(circle((315,-5),3));
draw((235, -5) -- (315, -5));
label("$\frac{80}{11}$", (323, -59));
fill(circle((315,-55),3));
draw((315, -5) -- (315, -55));

fill(circle((275,-35),3));
label("$\frac{67}{11}$", (275, -21));
draw((195, -20) -- (275, -35));
draw((235, -5) -- (275, -35));
draw((235, -55) -- (275, -35));
draw((315, -5) -- (275, -35));
draw((315, -55) -- (275, -35));

\end{asy}
\end{center}
\end{figure}
\vvv

\vvv
\begin{figure}[H]
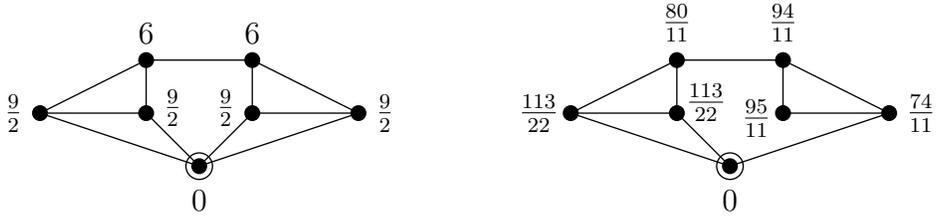

\begin{center}
\begin{asy}
fill(circle((40,0),3));
label("6", (40, 10));
fill(circle((80,0),3));
label("6", (80, 10));

fill(circle((0,-20),3));
label("$\frac92$", (-10, -20));
fill(circle((40,-20),3));
label("$\frac92$", (50, -18));
fill(circle((80,-20),3));
label("$\frac92$", (70, -18));
fill(circle((120,-20),3));
label("$\frac92$", (130, -20));

fill(circle((60,-40),3));
label("0", (60, -53));
draw(circle((60,-40),5));

draw((40, 0) -- (80, 0));

draw((40, 0) -- (0, -20));
draw((40, 0) -- (40, -20));
draw((80, 0) -- (80, -20));
draw((80, 0) -- (120, -20));

draw((0, -20) -- (40, -20));
draw((80, -20) -- (120, -20));

draw((60, -40) -- (0, -20));
draw((60, -40) -- (40, -20));
draw((60, -40) -- (80, -20));
draw((60, -40) -- (120, -20));

// -------------------------------

fill(circle((240,0),3));
label("$\frac{80}{11}$", (240, 14));
fill(circle((280,0),3));
label("$\frac{94}{11}$", (280, 14));

fill(circle((200,-20),3));
label("$\frac{113}{22}$", (188, -20));
fill(circle((240,-20),3));
label("$\frac{113}{22}$", (251, -16));
fill(circle((280,-20),3));
label("$\frac{95}{11}$", (270, -22));
fill(circle((320,-20),3));
label("$\frac{74}{11}$", (332, -20));

fill(circle((260,-40),3));
label("0", (260, -53));
draw(circle((260,-40),5));

draw((240, 0) -- (280, 0));

draw((240, 0) -- (200, -20));
draw((240, 0) -- (240, -20));
draw((280, 0) -- (280, -20));
draw((280, 0) -- (320, -20));

draw((200, -20) -- (240, -20));
draw((280, -20) -- (320, -20));

draw((260, -40) -- (200, -20));
draw((260, -40) -- (240, -20));

draw((260, -40) -- (320, -20));

\end{asy}
\caption{Examples with non-integer hitting times}
\end{center}
\end{figure}
\vvv



\msection{Simple random walks on connected graphs}

Consider a "straight-line" tree graph, which is more formally known as $P_m$ --  \textit{path graph} or \textit{path tree of length} $m$ -- with $m+1$ vertices and $m$ edges pictured below.

\vvv
\begin{figure}[H]
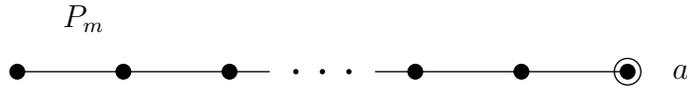

\begin{center}
\begin{asy}
label("$P_m$", (0,0));
fill(circle((-25,-20),3));
draw((-25, -20) -- (15, -20));
fill(circle((15,-20),3));
draw((15, -20) -- (55, -20));
fill(circle((55,-20),3));
draw((55, -20) -- (70, -20));
fill(circle((80,-20),1));
fill(circle((90,-20),1));
fill(circle((100,-20),1));

draw((110, -20) -- (125, -20));
fill(circle((125,-20),3));
draw((125, -20) -- (165, -20));
fill(circle((165,-20),3));
draw((165, -20) -- (205, -20));
fill(circle((205,-20),3));
label("$a$", (225,-20));
draw(circle((205,-20),5));

\end{asy}
\caption{Path tree}
\end{center}
\end{figure}
\vvv

If we index vertices from right to left starting at zero ($a_0$ = $a$) and denote hitting time $\mch(a_k)$ for vertex $a_k$ as $h_k$ then system (\ref{eq:maineq_graph}) turns into the following recurrence equation
\begin{align}
\label{eq:maineq_path}
h_k = 1 + (h_{k-1}+h_{k+1})/2,\quad 0 < k < m
\end{align}
Also, obviously $h_0 = 0$ and $h_m = 1 + h_{m-1}$.

\vvv
\begin{figure}[H]
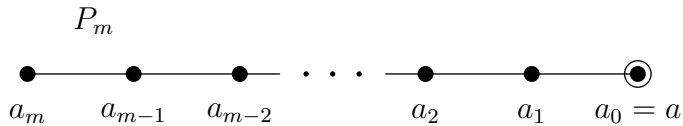

\begin{center}
\begin{asy}
label("$P_m$", (0,0));
fill(circle((-25,-20),3));
draw((-25, -20) -- (15, -20));
fill(circle((15,-20),3));
draw((15, -20) -- (55, -20));
fill(circle((55,-20),3));
draw((55, -20) -- (70, -20));
fill(circle((80,-20),1));
fill(circle((90,-20),1));
fill(circle((100,-20),1));

draw((110, -20) -- (125, -20));
fill(circle((125,-20),3));
draw((125, -20) -- (165, -20));
fill(circle((165,-20),3));
draw((165, -20) -- (205, -20));
fill(circle((205,-20),3));
draw(circle((205,-20),5));

label("$a_0 = a$", (205,-35));
label("$a_1$", (165,-35));
label("$a_2$", (125,-35));

label("$a_m$", (-25,-35));
label("$a_{m-1}$", (15,-35));
label("$a_{m-2}$", (55,-35));

\end{asy}
\caption{Indexed path tree}
\label{fig:indx_path_tree}
\end{center}
\end{figure}
\vvv

Now let's rewrite the equation (\ref{eq:maineq_path}) as
\begin{align}
\label{eq:maineq_path_delta}
h_{k+1}-h_k = 2 + (h_{k-1}-h_k),\quad 0 < k < n
\end{align}

If we denote difference $h_k-h_{k-1}$ as $d_k$ then $d_m = 1$ and therefore $d_{m-1} = 3$, $d_{m-2} = 5$ etc., $d_{m-k} = 2k + 1$. It follows immediately that $h_m - h_{m-k} = k^2$ and since $h_0 = 0$ we obtain that $h_m = m^2$ and consequently, $h_{m-k} = m^2-k^2$. This is an easy and well-known fact (see, for instance, \cite{BlomHolSan}). This gives us a very simple upper bound for $\mch_R(G)$ for the path trees.

Using a somewhat similar technique we can prove the same for any tree $T$.

\begin{mthm}
\label{thm:st1}
For simple random walk on any finite tree $T$ with $m$ edges inequality $\mch(T) \lte m^2$ holds true.
\end{mthm}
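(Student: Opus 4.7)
The plan is to reduce $\mch(x,y)$ to a telescoping sum along the unique path from $x$ to $y$ and bound each term by counting edges on either side. Let $x = a_d, a_{d-1}, \ldots, a_1, a_0 = y$ be that path. Since $a_{k-1}$ separates $a_k$ from $y$ in the tree, a walk from $a_k$ must first hit $a_{k-1}$ before reaching $y$; applying the strong Markov property at these first visits gives
\[
\mch(x, y) = \sum_{k=1}^{d} \mch(a_k, a_{k-1}).
\]

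The heart of the argument is the following lemma, which extends the path-graph computation $d_{m-k} = 2k+1$ derived above to arbitrary trees: for any edge $[u, v]$ of a tree $T'$, if $T'^v$ denotes the component of $T' \setminus [u,v]$ containing $v$ and $m^v = |E(T'^v)|$, then $\mch_{T'}(v, u) = 2m^v + 1$. I would prove this by induction on $m^v$. The base case $m^v = 0$ ($v$ a leaf) gives $\mch(v, u) = 1$. For the inductive step, note that a walk from $v$ is confined to $T'^v \cup \{u\}$ until absorption, since it cannot reach $u$'s side without first visiting $u$. Let $w_1, \ldots, w_s$ be the other neighbors of $v$ and $n_i$ the number of edges in the subtree $S_i$ rooted at $w_i$, so $m^v = s + \sum_i n_i$. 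Applying the inductive hypothesis inside each sub-tree $S_i \cup \{v\}$ yields $\mch(w_i, v) = 2n_i + 1$; combined with the tree identity $\mch(w_i, u) = \mch(w_i, v) + \mch(v, u)$ and the one-step equation $\mch(v, u) = 1 + \tfrac{1}{s+1}\sum_i \mch(w_i, u)$, a short linear solve yields $\mch(v, u) = 2m^v + 1$.

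Given the lemma, let $m_k$ be the number of edges in the component of $T \setminus [a_{k-1}, a_k]$ containing $a_k$, so $\mch(a_k, a_{k-1}) = 2m_k + 1$. The other component contains the $k-1$ path edges $[a_0, a_1], \ldots, [a_{k-2}, a_{k-1}]$, hence $m_k \le m - k$. Summing,
\[
\mch(x, y) \le \sum_{k=1}^{d} \bigl(2(m-k) + 1\bigr) = 2md - d^2 = m^2 - (m-d)^2 \le m^2,
\]
since $d \le m$, which proves the theorem and incidentally yields the tree case of Theorem~\ref{th:simple}.

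The principal obstacle is establishing the edge lemma $\mch(v, u) = 2m^v + 1$; once this is in place, the remainder is edge-counting. An alternative route uses the commute-time identity together with the fact that a leaf hits its unique neighbor in one step, but the inductive proof sketched above is self-contained and fits the spirit of the path-graph computation preceding the theorem.
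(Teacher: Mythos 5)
Your proof is correct and follows essentially the same route as the paper: your edge lemma $\mch(v,u)=2m^v+1$ is exactly the paper's Lemma~\ref{lem:tail_eq} (stated there as the difference identity $\mch(y)-\mch(x)=2\ell(y)+1$ with respect to the absorbing vertex, which is equivalent since the walk must pass through the closer endpoint), and it is proved by the same induction and one-step linear solve. The telescoping along the unique $x$--$y$ path together with the count $m_k\lte m-k$ reproduces the paper's concluding summation, including the sharper bound $m^2-(m-d)^2$.
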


\begin{proof}
We need to prove inequality $\mch(x,a) \lte m^2$ for any pair of vertices in $T$. To do that we will fix vertex $a$, declare it absorbing and convert the given walk into an $\mca$-walk. Also without any loss of generality we can consider only vertices from connected component of $T\backslash a$ that contains $x$.

We will call $\mcl(x) = T\backslash T_x$ the \textit{tail} of vertex $x$, where $T_x$ is the set of all vertices that lie in the component of $T\backslash x$ that contains $a$. Simply put, tail of $x$ is the sub-tree defined by all the vertices that cannot be connected to $a$ by a path bypassing $x$. We will denote by $\ell(x)$ the number of edges in sub-tree $\mcl(x)$. In the following picture sub-tree $\mcl(x)$ is shown with thicker lines; $\ell(x) = 9$.

\vvv
\begin{figure}[H]
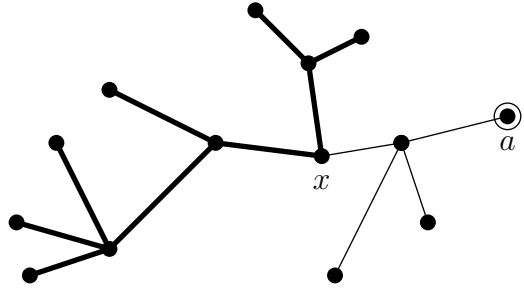

\begin{center}
\begin{asy}
fill(circle((-25,-20),3));
fill(circle((-25,-80),3));
draw((-25, -20) -- (15, -40), linewidth(2pt));
draw((-25, -80) -- (15, -40), linewidth(2pt));

draw((-25, -80) -- (-45, -40), linewidth(2pt));
fill(circle((-45,-40),3));
draw((-25, -80) -- (-60, -70), linewidth(2pt));
fill(circle((-60,-70),3));
draw((-25, -80) -- (-55, -90), linewidth(2pt));
fill(circle((-55,-90),3));

fill(circle((15,-40),3));
draw((15, -40) -- (55, -45), linewidth(2pt));
fill(circle((55,-45),3));

label("$x$", (55,-55));

draw((55, -45) -- (50, -10), linewidth(2pt));
fill(circle((50,-10),3));
draw((50, -10) -- (30, 10), linewidth(2pt));
fill(circle((30,10),3));
draw((50, -10) -- (70, 0), linewidth(2pt));
fill(circle((70,0),3));

fill(circle((85,-40),3));
draw((55, -45) -- (85, -40));

fill(circle((85,-40),3));
draw((85, -40) -- (95, -70));
fill(circle((95,-70),3));
draw((85, -40) -- (60, -90));
fill(circle((60,-90),3));

draw((85, -40) -- (125, -30));
fill(circle((125,-30),3));
draw(circle((125,-30),5));

label("$a$", (125,-40));

\end{asy}
\caption{"Tail of a vertex" example}
\end{center}
\end{figure}
\vvv

\begin{mlemma}
\label{lem:tail_eq}
For any vertex $x$ and any of its neighbors $y\in\mcl(x)$, the following equality holds true
\begin{align}
\label{eq:tail_eq}
\mch(y) - \mch(x) = 2 \ell(y) + 1
\end{align}
\end{mlemma}

\begin{proofL} We will prove that using induction by $\ell(x)$. Basis: $\ell(x) = 1$, that is, $y$ is a leaf (pendant vertex). Then obviously, $\mch(y) - \mch(x) = 1$ and $\ell(y) = 0$ which proves the basis. Now, for the step of induction we will take some pair of $x$ and $y$. Since $\ell(y) < \ell(x)$ then lemma is true for the pair of $y$ and any of its other neighbors in $T$ -- we will name them $z_i, i = 1, ..., k$. 

\vvv
\begin{figure}[H]
\begin{center}
\begin{asy}
fill(circle((-75,-20),1));
fill(circle((-70,-20),1));
fill(circle((-65,-20),1));
fill(circle((-75,-40),1));
fill(circle((-70,-40),1));
fill(circle((-65,-40),1));
fill(circle((-75,-60),1));
fill(circle((-70,-60),1));
fill(circle((-65,-60),1));
fill(circle((-75,-80),1));
fill(circle((-70,-80),1));
fill(circle((-65,-80),1));

fill(circle((-25,-20),3));
fill(circle((-25,-40),3));
fill(circle((-25,-60),3));
fill(circle((-25,-80),3));

draw((-25, -20) -- (15, -40));
label("$z_1$", (-45,-20));
draw((-25, -40) -- (15, -40));
label("$z_2$", (-45,-40));
draw((-25, -60) -- (15, -40));

draw((-25, -80) -- (15, -40));
label("$z_k$", (-45,-80));

fill(circle((15,-40),3));
draw((15, -40) -- (55, -40));
fill(circle((55,-40),3));
draw((55, -40) -- (70, -40));

fill(circle((80,-40),1));
fill(circle((90,-40),1));
fill(circle((100,-40),1));
label("$y$", (15,-55));
label("$x$", (55,-55));

draw((55, -40) -- (75, -10));
fill(circle((75,-10),3));
draw((55, -40) -- (50, -10));
fill(circle((50,-10),3));

draw((125, -40) -- (165, -40));
fill(circle((165,-40),3));
draw((165, -40) -- (205, -40));
fill(circle((205,-40),3));
draw(circle((205,-40),5));

label("$a_0 = a$", (205,-55));
label("$a_1$", (165,-55));
\end{asy}
\end{center}
\end{figure}
\vvv

Writing out the equation (\ref{eq:maineq_graph}) for vertex $y$ we get
\begin{align}
\label{eq:vw_induction}
\mch(y) = 1 + \frac1{k+1}\left(\mch(x)+\sum_{i=1}^k \mch(z_i)\right)
\end{align}
or
\begin{align}
(k+1)\mch(y) = k+1 + \mch(x)+\sum_{i=1}^k \mch(z_i)\nn
\end{align}

Now we rewrite that using equation (\ref{eq:tail_eq}) for each pair $(y, z_i)$ and equality $\ell(y) = k + \sum_{i=1}^k \ell(z_i)$.
\begin{align}
(k+1)\mch(y) &= k+1 + \mch(x)+k\mch(y)+k+2\sum_{i=1}^k \ell(z_i)\nncr
\mch(y) &= 2k+1 + \mch(x)+2\sum_{i=1}^k \ell(z_i)\nn
\end{align}
which gives us
$$
\mch(y) - \mch(x) = 2\ell(y) + 1.
$$
\end{proofL}

Now connect vertices $a$ and $x$ by a non-self-intersecting path of length $d$ (there is only one such path) and index its vertices in exactly the same way we did above in the Fig.\ref{fig:indx_path_tree}.

Then we have set of equations $\mch(a_k) - \mch(a_{k-1}) = 2\ell(a_k) + 1$ and summing them up we get $\mch(x) - \mch(a) = 2\sum \ell(a_k) + d$. Since $\mch(a) = 0$ and $\ell(a_k) \lte \ell(a_{k-1})-1$, it follows that
\begin{align}
\mch(x) & \lte 2d\cdot \ell(a_1) + d - 2\sum_{i=0}^{d-1} i = 2d\cdot \ell(a) - d^2 \nncr
        & = \ell(a)^2 - (\ell(a)-d)^2 \lte \ell(a)^2 \lte m^2 \nn
\end{align}
and $\ell(a)^2$ is equal to $m^2$ if and only if there is only one connected component in $T\backslash a$. 
\end{proof}

\vspace{10pt}

Below are a few corollaries of Lemma \ref{lem:tail_eq}. The first three are nearly self-evident and we will leave them as easy exercises for the reader.

\begin{mcrl}
For simple random walk on a tree all hitting times are integers.
\end{mcrl}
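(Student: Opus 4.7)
The plan is to deduce this immediately from Lemma \ref{lem:tail_eq}, via a telescoping sum along the unique path from $x$ to the absorbing vertex $a$.

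First I would fix an arbitrary pair of vertices $x,a$ in the tree $T$, declare $a$ absorbing, and consider the unique non-self-intersecting path $a = a_0, a_1, \ldots, a_d = x$ connecting them, indexed exactly as in Figure~\ref{fig:indx_path_tree}. For each $k \gte 1$, the vertex $a_k$ is a neighbor of $a_{k-1}$ lying in the tail $\mcl(a_{k-1})$, because any path from $a_k$ to $a$ inside $T$ must pass through $a_{k-1}$ (otherwise the tree would contain a cycle). Hence Lemma~\ref{lem:tail_eq} applies to the pair $(a_{k-1}, a_k)$ and gives
$$
\mch(a_k) - \mch(a_{k-1}) = 2\ell(a_k) + 1.
$$

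Next I would telescope. Since $\mch(a_0) = \mch(a) = 0$, summing the above equalities for $k = 1, \ldots, d$ yields
$$
\mch(x) = \mch(a_d) = d + 2\sum_{k=1}^{d}\ell(a_k).
$$
Every $\ell(a_k)$ is a nonnegative integer (a count of edges in a subtree) and $d$ is a nonnegative integer (a graph distance), so the right-hand side is an integer. As $x$ and $a$ were arbitrary, all hitting times are integers.

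There is essentially no obstacle here: the whole content is packaged inside Lemma~\ref{lem:tail_eq}, and the only thing to verify is that successive vertices on the path $a_0, \ldots, a_d$ do satisfy the tail hypothesis $a_k \in \mcl(a_{k-1})$, which is forced by the tree being acyclic. If a slightly more self-contained presentation is desired, one could alternatively argue by induction on $|\mcv(T)|$, peeling off a leaf and using integrality of the equation (\ref{eq:maineq_graph}) at that leaf; but the telescoping argument above is shorter and makes direct use of the lemma just proved.
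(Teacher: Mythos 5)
Your telescoping argument is correct and is exactly the route the paper intends: the corollary is left as an exercise, but the identity $\mch(a_k)-\mch(a_{k-1})=2\ell(a_k)+1$ summed along the unique path from $a$ to $x$ is precisely how the paper itself uses Lemma~\ref{lem:tail_eq} in the proof of Theorem~\ref{thm:st1}, and your verification that $a_k\in\mcl(a_{k-1})$ is the only hypothesis to check. Nothing is missing.
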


\begin{mcrl}
$\mch(x) \lte \ell(a)^2-\ell(x)^2$.
\end{mcrl}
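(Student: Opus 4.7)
The plan is to reuse the identity $\mch(x) = 2\sum_{k=1}^d \ell(a_k) + d$ already established inside the proof of Theorem~\ref{thm:st1}, where $a = a_0, a_1, \ldots, a_d = x$ is the unique path in $T$ from $a$ to $x$ and $d = \dist(a, x)$, and then to bound its right-hand side by a telescoping expansion of $\ell(a)^2 - \ell(x)^2$ that keeps the $\ell(x)$ endpoint intact.

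To recap the setup, applying Lemma~\ref{lem:tail_eq} to each consecutive pair $(a_{k-1}, a_k)$ (noting that $a_k \in \mcl(a_{k-1})$) and telescoping from $\mch(a) = 0$ gives the identity above. For the main step I would write
$$\ell(a)^2 - \ell(x)^2 \;=\; \sum_{k=1}^{d}\bigl(\ell(a_{k-1})^2 - \ell(a_k)^2\bigr) \;=\; \sum_{k=1}^{d}\bigl(\ell(a_{k-1}) - \ell(a_k)\bigr)\bigl(\ell(a_{k-1}) + \ell(a_k)\bigr),$$
and invoke the integer decrement inequality $\ell(a_{k-1}) - \ell(a_k) \gte 1$, which holds because the edge $\edge{a_{k-1}, a_k}$ lies in $\mcl(a_{k-1})$ but not in $\mcl(a_k)$. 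Combined with $\ell(a_{k-1}) \gte \ell(a_k) + 1$, this shows each summand is at least $2\ell(a_k) + 1$; summing then yields $\ell(a)^2 - \ell(x)^2 \gte 2\sum_{k=1}^d \ell(a_k) + d = \mch(x)$, which is exactly the claim.

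There is essentially no obstacle: the only tactical choice is to bound the two sums term by term via this telescoping rather than to apply the coarser monotone estimate $\ell(a_k) \lte \ell(a) - k$ used in the proof of Theorem~\ref{thm:st1}. That coarser route would also imply the corollary (since $\ell(x) \lte \ell(a) - d$ forces $\ell(a)^2 - (\ell(a)-d)^2 \lte \ell(a)^2 - \ell(x)^2$), but would obscure the clean $a^2 - b^2$ form that the statement advertises.
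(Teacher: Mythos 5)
Your proof is correct, and it is essentially the intended argument: the paper leaves this corollary as an exercise, but it is meant to follow exactly as you do it, from Lemma~\ref{lem:tail_eq} applied along the unique $a$--$x$ path (giving $\mch(x)=2\sum_{k=1}^d\ell(a_k)+d$) together with the decrement $\ell(a_{k-1})\gte\ell(a_k)+1$. Your telescoping of $\ell(a)^2-\ell(x)^2$ is a clean way to package that bound; the coarser estimate $\ell(a_k)\lte\ell(a)-k$ plus $\ell(x)\lte\ell(a)-d$ would work equally well.
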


\begin{mcrl}
$\mch(x) \lte m^2-(m-d)^2$, where $d = \dist(a,x)$.
\end{mcrl}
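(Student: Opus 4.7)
The plan is to reduce this directly to the inequality chain already established in the proof of Theorem~\ref{thm:st1}, rather than re-running the induction. I would first invoke the intermediate bound
$$
\mch(x) \lte 2d\cdot \ell(a) - d^2,
$$
which was derived there by summing the tail-difference identity $\mch(a_k)-\mch(a_{k-1})=2\ell(a_k)+1$ along the unique $a$-to-$x$ path and telescoping via $\ell(a_k)\lte \ell(a_{k-1})-1$. Equivalently, this is exactly the content of the preceding corollary once one rewrites $2d\ell(a)-d^2 = \ell(a)^2-(\ell(a)-d)^2$.

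Next I would exploit monotonicity: for fixed $d\gte 0$, the function $t\mapsto 2dt-d^2$ is non-decreasing in $t$. Since the $a$-to-$x$ path of length $d$ is contained in the sub-tree $\mcl(a)$ that we restricted to at the start of the theorem's proof, we have $d\lte\ell(a)\lte m$. Replacing $\ell(a)$ by $m$ can therefore only enlarge the right-hand side, yielding
$$
\mch(x)\lte 2dm-d^2 \;=\; m^2-(m-d)^2,
$$
which is the claim.

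I do not expect any real obstacle: the corollary is a direct weakening of the previous one, and the only point worth flagging is that, as written, the bound $\ell(a)\lte m$ is sharp exactly when $T\setminus a$ is connected, matching the equality case already noted in Theorem~\ref{thm:st1}.
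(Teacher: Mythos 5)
Your argument is correct and is essentially the route the paper intends for this "easy exercise": invoke the bound $\mch(x)\lte 2d\cdot\ell(a)-d^2$ obtained in the proof of Theorem~\ref{thm:st1} and then use $\ell(a)\lte m$ together with monotonicity of $t\mapsto 2dt-d^2$ to get $2dm-d^2=m^2-(m-d)^2$. One minor caveat: your aside identifying $\ell(a)^2-(\ell(a)-d)^2$ with the preceding corollary's bound $\ell(a)^2-\ell(x)^2$ is not accurate (only $\ell(x)\lte\ell(a)-d$ holds, so the two bounds differ and are in general incomparable), but your main chain does not depend on that remark.
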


Another widely used characteristic of a random walk on graph $G$ is \textit{commute time} $\mcc(x,y)$ between vertices $x, y \in G$. It is defined as sum of hitting times $\mch(x,y)+\mch(x,y)$ and is equal to the expected (average) time it takes the walker to "travel" from $x$ to $y$ and then back to $x$. Imagine that $x$ is your house and $y$ is your office -- that immediately explains the name, doesn't it?

\begin{mcrl}
If $x$ and $y$ are two vertices in $T$ then
$$
\mcc(x,y) = 2m\cdot\dist(x,y)
$$
For instance, commute time between any two neighboring vertices equals twice the number of the edges.
\end{mcrl}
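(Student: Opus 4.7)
The plan is to split the commute time as $\mcc(x,y)=\mch(x,y)+\mch(y,x)$ and evaluate each hitting time along the unique path $y=a_0,a_1,\dots,a_d=x$ between $y$ and $x$ in $T$, using Lemma~\ref{lem:tail_eq} twice — once with $y$ declared absorbing, once with $x$ declared absorbing.

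With $y$ absorbing, Lemma~\ref{lem:tail_eq} applied to each consecutive pair on the path gives $\mch(a_k,y)-\mch(a_{k-1},y)=2\ell_y(a_k)+1$, where $\ell_y(a_k)$ is the number of edges in the tail of $a_k$ relative to $y$. Telescoping from $a_0$ to $a_d$ yields
$$
\mch(x,y)=2\sum_{k=1}^d\ell_y(a_k)+d,
$$
and an identical argument with $x$ absorbing produces
$$
\mch(y,x)=2\sum_{k=0}^{d-1}\ell_x(a_k)+d.
$$

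The crucial observation — and essentially the only substantive step — is that for each edge $\edge{a_{k-1},a_k}$ on the path, removing it disconnects $T$ into exactly two subtrees: the $y$-side subtree, whose edge set is $\mcl(a_{k-1})$ relative to $x$, and the $x$-side subtree, whose edge set is $\mcl(a_k)$ relative to $y$. Together with the removed edge, these exhaust all $m$ edges of $T$, so
$$
\ell_y(a_k)+\ell_x(a_{k-1})=m-1\qquad(k=1,\dots,d).
$$
Adding the two telescoped expressions and substituting this identity collapses the double sum to $d(m-1)$, giving $\mcc(x,y)=2d(m-1)+2d=2md$, as claimed. There is no real obstacle here; the only thing to watch is that the two tails used in each pairing are taken with respect to \emph{different} absorbing vertices, which is precisely what makes their edge counts partition $T\setminus\edge{a_{k-1},a_k}$ cleanly.
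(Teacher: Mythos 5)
Your proposal is correct and follows essentially the same route as the paper: both decompose $\mcc(x,y)=\mch(x,y)+\mch(y,x)$, apply Lemma~\ref{lem:tail_eq} along the unique $x$--$y$ path with each endpoint in turn as the absorbing vertex, and telescope. The only difference is the final bookkeeping — you pair the two tails across each path edge via $\ell_y(a_k)+\ell_x(a_{k-1})=m-1$, whereas the paper expands the tails through the hanging-edge counts $e_k$ and uses $\sum_k e_k=m-d$; both collapse to $2md$ correctly.
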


\begin{proofL}
Again, connect $x$ and $y$ with path $\Pi = [y_0y_1\ldots y_d]$ of length $d=\dist(x,y)$ where $y_0 = y$ and $y_d = x$ and for every index $0 \lte k \lte d$ denote by $e_k$ the number of edges in the component of subtree $(T\backslash\Pi)\cup y_k$ that contains $y_k$. Visually, if you "take" vertices $x$ and $y$ in your right and left hands so that path $\Pi$ becomes a horizontal rope connecting your hands and shake the entire tree so that the other vertices and edges will drop down to dangle from the nodes of $\Pi$, then $e_k$ represents the number of edges "hanging" on the $k$-th node $y_k$.

\vvv
\begin{figure}[H]
\begin{center}
\begin{asy}
// variables
real y0 = 0;
real y1 = -30, y2 = -45, y3 = -57, y4 = -60, y5 = -77;;
real x0 = 15, x1 = 55, x2 = 90, x3 = 120, x4 = 165, x5 = 205;
real dx = 30, ddx = 10, dy = 15;
// path
fill(circle((x0,y0),3));
draw((x0, y0) -- (x1, y0));
fill(circle((x1,y0),3));
draw((x1, y0) -- (x2, y0));
fill(circle((x2,y0),3));
draw((x2, y0) -- (x3, y0));
fill(circle((x3,y0),3));
draw((x3, y0) -- (x4, y0));
fill(circle((x4,y0),3));
draw((x4, y0) -- (x5, y0));
fill(circle((x5,y0),3));
// circling the path's ends
draw(circle((x0,y0),5));
draw(circle((x5,y0),5));
//dangles
// #0
draw((x0, y0) -- (x0, y4));
fill(circle((x0,y4),3));
draw((x0, y0) -- (x0-ddx, y5));
fill(circle((x0-ddx,y5),3));
// #1
draw((x1, y0) -- (x1, y2));
fill(circle((x1,y2),3));
draw((x1, y2) -- (x1, y4));
fill(circle((x1,y4),3));
draw((x1, y2) -- (x1+ddx, y4));
fill(circle((x1+ddx,y4),3));
// #2
// #3
draw((x3, y0) -- (x3, y2));
fill(circle((x3,y2),3));
// #4
draw((x4, y0) -- (x4, y1));
fill(circle((x4,y1),3));
draw((x4, y1) -- (x4-ddx, y2));
fill(circle((x4-ddx,y2),3));
draw((x4, y1) -- (x4, y3));
fill(circle((x4,y3),3));
draw((x4, y1) -- (x4+ddx, y4));
fill(circle((x4+ddx,y4),3));
draw((x4-ddx, y2) -- (x4-ddx, y5));
fill(circle((x4-ddx,y5),3));
// #5
draw((x5, y0) -- (x5, y1));
fill(circle((x5,y1),3));
draw((x5, y1) -- (x5+ddx, y2));
fill(circle((x5+ddx,y2),3));
draw((x5, y1) -- (x5+ddx, y3));
fill(circle((x5+ddx,y3),3));
draw((x5, y1) -- (x5, y5));
fill(circle((x5,y5),3));
// labels
label("$y_0 = y$", (x0,y0+dy));
label("$y_1$",     (x1,y0+dy));
label("$y_{d-1}$", (x4,y0+dy));
label("$y_d = x$", (x5,y0+dy));
// "box" around path
real dp = 5;
draw((x0-dp,y0-dp)--(x5+dp,y0-dp)--(x5+dp,y0+dp)--(x0-dp,y0+dp)--(x0-dp,y0-dp), dotted);
\end{asy}
\caption{$\{e_k\} = {2,3,0,1,5,4}$}
\end{center}
\end{figure}
\vvv

Let's assume that vertex $y$ is absorbing. Then from Lemma \ref{lem:tail_eq}
$$
\mch(y_k, y) - \mch(y_{k-1}, y) = 2\ell(y_k) + 1
$$
and after expressing $\ell(y_k)$ through $\{e_i\}$
$$
\mch(y_k, y) - \mch(y_{k-1}, y) = 2\left(e_k+e_{k+1}+\ldots+e_d+(d-k)\right) + 1
$$
Adding these formulas for $k=1,\ldots, d$ we obtain
\begin{align}
\mch(x,y) &= \mch(y_d, y) - \mch(y_0, y) \nncr
          &= 2\sum_{k=1}^d k\cdot e_k + \sum_{k=1}^d (2(d-k)) + 1) \nncr
          & = 2\sum_{k=0}^d k\cdot e_k + d^2 \nn
\end{align}

Now swap $x$ and $y$, reverse indexing, and we have $\mch(y, x) = 2\sum_{k=0}^d (d-k)\cdot e_k + d^2$. Add these two equations together and we get
\begin{align}
\mcc(x,y) &= \mch(x,y) + \mch(y,x) \nncr
   &= 2d^2 + 2\sum_{k=0}^d d\cdot e_k = 2d^2 + 2d\sum_{k=0}^d e_k = 2d^2 + 2d(m-d) = 2md \nn
\end{align}
\end{proofL}

I should mention here that this corollary also immediately follows from one of the theorems in article \cite{Ruzzo} which we will use later (as an example of electric network approach) to prove our Theorem \ref{thm:asg1}. Same theorem (together with monotonicity laws for electric resistance) proves that for the case of finite connected graph we always have inequality $\mcc(x,y) \lte 2md$. The theorem itself and this inequality can also be proved in a rather straightforward way using \textit{harmonic functions} approach (see \cite{HopKan} or \cite{LyonsPeres}).

\xxx

Now let us move on to the connected graphs in general.

\begin{mthm}
\label{thm:sg1}
For simple random walk on graph $G = (\mcv, \mce)$ inequality
$$
\mch(x,y) \lte m^2 - (m-d)^2, 
$$
where $d = \dist(x,y)$, holds true for any two vertices $x,y \in \mcv$.
\end{mthm}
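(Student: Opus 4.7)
The plan is to reduce the problem to the tree case (Theorem~\ref{thm:st1} and its corollaries from Lemma~\ref{lem:tail_eq}) by passing to a suitable spanning tree, and then to bridge the remaining gap with an electric-network argument.

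First, fix $y$ as the absorbing vertex and set $d=\dist_G(x,y)$. Choose a spanning tree $T$ of $G$ containing some shortest $x$-$y$ path $(y=a_0,a_1,\ldots,a_d=x)$, so that $\dist_T(x,y)=d$. The tree corollary after Lemma~\ref{lem:tail_eq} then gives $\mch_T(x,y)\lte(n-1)^2-((n-1)-d)^2$, which is at most $m^2-(m-d)^2$, since the function $t\mapsto 2td-d^2$ is monotone increasing for $t\ge d$ and $n-1\lte m$. However, $\mch_G(x,y)$ is \emph{not} in general bounded above by $\mch_T(x,y)$: small explicit examples show that non-tree edges of $G$ can either decrease the hitting time (by providing shortcuts to $y$) or increase it (by diverting the walk into dead ends).

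To close the gap I would use the commute-time identity of Chandra--Raghavan--Ruzzo--Smolensky--Tiwari together with Rayleigh's monotonicity principle. These give $\mcc(x,y)=\mch(x,y)+\mch(y,x)=2m\,R_{\mathrm{eff}}(x,y)\lte 2md$ (the shortest $x$-$y$ path, viewed as a subnetwork with unit resistances, already realizes resistance $d$, and adding edges can only decrease the effective resistance). It then suffices to show the matching lower bound $\mch(y,x)\ge d^2-2m\bigl(d-R_{\mathrm{eff}}(x,y)\bigr)$, since from $\mch(x,y)=\mcc(x,y)-\mch(y,x)$ this immediately yields $\mch(x,y)\lte 2md-d^2$. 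For trees, where $R_{\mathrm{eff}}(x,y)=d$, the desired inequality reduces to $\mch(y,x)\ge d^2$, which is exactly the content of the path summation in the tree argument.

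The main obstacle is this refined lower bound on $\mch(y,x)$, because the naive inequality $\mch(y,x)\ge d^2$ can fail in general: on a ``lollipop'' graph (a small clique attached to the far end of a path), the parallel edges inside the clique simultaneously reduce both $R_{\mathrm{eff}}(x,y)$ and $\mch(y,x)$, so the desired inequality requires that these two reductions exactly offset. I would attempt to establish it via the voltage representation $\mch(y,x)=\sum_v d(v)\,\varphi(v)$, where $\varphi$ is the potential at $v$ when a unit of current is injected at $y$ and extracted at $x$, with a level-by-level estimate organized by $\dist(v,y)$ along the shortest path. An alternative route would be an induction on $m-(n-1)$, showing that adjoining each non-tree edge increases $\mch(x,y)$ by at most $2d$, so that the aggregate contribution exactly fills the gap between the tree bound $2(n-1)d-d^2$ and the target $2md-d^2$.
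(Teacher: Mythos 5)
Your write-up stops exactly where a proof is needed. Granting the commute-time identity $\mcc(x,y)=\mch(x,y)+\mch(y,x)=2mR_{\mathrm{eff}}(x,y)$ together with $R_{\mathrm{eff}}(x,y)\lte d$, the inequality you declare ``it suffices to show,'' namely $\mch(y,x)\gte d^2-2m\bigl(d-R_{\mathrm{eff}}(x,y)\bigr)$, is not a reduction at all: substituting $\mch(y,x)=2mR_{\mathrm{eff}}(x,y)-\mch(x,y)$ shows it is literally the statement $\mch(x,y)\lte 2md-d^2$ rewritten. So the heart of the theorem has only been reformulated, and neither of the two routes you offer for it is carried out. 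The voltage representation with a ``level-by-level estimate'' is only named, with no inequality actually derived from it; and the alternative claim --- that adjoining each non-tree edge to a spanning tree containing a shortest $x$--$y$ path increases $\mch(x,y)$ by at most $2d$ --- is itself an unproven assertion at least as strong as the inductive step one needs (it is consistent with small examples, e.g.\ when $y$ is pendant one can check increments of exactly $2d$, but you give no argument, and it is not obviously easier than the theorem). You are also right that the opening spanning-tree observation yields nothing by itself, since $\mch_G(x,y)$ need not be majorized by $\mch_T(x,y)$; so as it stands the proposal contains no complete path from known facts to the bound.

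For comparison, the paper closes this gap by a self-contained induction on $m$ after making $y=a$ absorbing: Lemma \ref{lem:sg1_1} shows that deleting an edge incident to $a$ (while keeping $x$ and $a$ connected, and keeping a shortest path intact) cannot decrease $\mch(x,a)$, which reduces everything to the case where $a$ is pendant; then Lemma \ref{lem:sg1_2}, the identity $\sum_{z\in\mcn(a)}\mch(z)=2m-\deg(a)$, gives $\mch(a_1,a)=2m-1$ for the unique neighbor $a_1$, and the inductive hypothesis is applied to $G\setminus a$ with target $a_1$ and distance $d-1$, yielding $(m-1)^2-(m-d)^2+2m-1=m^2-(m-d)^2$. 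If you want to salvage your electric-network plan, the honest task is to prove your displayed lower bound on $\mch(y,x)$ (or the ``at most $2d$ per added edge'' monotonicity claim) from scratch; otherwise the edge-deletion induction above is the route that actually works.
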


\begin{proof}
Again, to simplify the notation and reasoning, we will -- just as in \ref{thm:st1} -- rename vertex $y$ to $a$, declare it absorbing and convert the given walk into an $\mca$-walk.

\begin{mlemma}
\label{lem:sg1_1}
Consider edge $e=\edge{a,x}\in\mce$ such that vertices $a$ and $x$ are in the same connected component of graph $G' = G\backslash e$. Then $\mch_G(x) \lte \mch_{G'}(x)$. In other words, removing such edge cannot decrease hitting time.
\end{mlemma}

\begin{proofL}
Seems self-evident -- erasing an edge leading directly into the absorbing state should only increase hitting time. However this is not an entirely trivial fact. Let us denote $k = \deg_{G'}(x)$. If $k=0$ then $x$ is only connected to $a$ and there is nothing to prove. 

\vvv
\begin{figure}[H]
\begin{center}
\begin{asy}
fill(circle((125,-50),3));
fill(circle((125,-35),3));
fill(circle((135,-20),3));

draw((125, -50) -- (165, -40));
draw((125, -35) -- (165, -40));
draw((135, -20) -- (165, -40));

fill(circle((165,-40),3));
label("$a_1$", (165,-55));

draw((165, -40) -- (235, -50));
draw((165, -40) -- (160, -20));

fill(circle((235,-50),3));
draw(circle((235,-50),5));

label("$a$", (235,-65));

fill(circle((205,-20),3));
label("$x$", (235,-20));

draw((235, -50) -- (205, -20), dotted);
label("$e$", (218, -40));

fill(circle((205,0),3));
draw((205, -20) -- (205, 0));
label("$y_1$", (210,10));

fill(circle((185,0),3));
draw((205, -20) -- (185, 0));
label("$y_2$", (175,5));

draw((205, -20) -- (185, -15));

pair[] z={(105,-50), (150,0), (200,25), (245,-20), (195,-35)};
draw(z[0]..z[1]..z[2]..z[3]..z[4]..cycle, dashed);

\end{asy}
\end{center}
\end{figure}
\vvv

Now consider $\partial_v = \mch_{G'}(v) - \mch_G(v)$ for any $v\in\mcv$. We need to show that all these numbers are non-negative (they are actually positive). Subtracting systems (\ref{eq:maineq_graph}) for $G$ and $G'$ from each other we get the same matrix $\mathbf{L}$ on the left side of the resulting system but the right side vector is different from (\ref{eq:maineq_graph}). Its coordinates are zero for all indices (vertices) with exception of $x$. We have
\begin{align}
\begin{cases}
\partial_v - \frac1{\deg(v)}\displaystyle\sum_{z\in \mcn(v)} \partial_z &=  0, \quad v \neq x \\
\partial_x - \frac1{k+1}\displaystyle\sum_{z\in \mcn(x)} \partial_z &=  \frac1{k(k+1)}\displaystyle\sum_{z\in \mcn(x)} \mch_{G'}(z) \label{eq:diffAX}
\end{cases}
\end{align}
and that number on the right side of the second equation in (\ref{eq:diffAX}) is obviously positive. If we denote that number by $r$ then we have that matrix $\mathbf{L}$ maps vector $\{\partial_v\}$ to vector $\{r\delta_{vx}\}$. In all these formulas we are using $\mcn_G$ since it is easy to see that changing it to $\mcn_{G'}$ in cases where it is called for makes no difference.

Let us consider another set of numbers that satisfy similar system of equations. Namely, for each vertex $v$ define $s_v$ as the probability that our random walk starting in $v$ will be absorbed in $a$ with edge $e$ being its last transition.

Obviously for any vertex except $x$ you have
$$
s_v = \frac1{\deg(v)}\sum_{z\in \mcn(v)} s_z
$$
because each path starting in $v$ first goes to one of its neighbors (this transition not being edge $e$!) and then has to get from there to $a$ passing through $e$ in the end. For vertex $x$ 
$$
s_x = \frac1{k+1}\sum_{z\in \mcn(x)} s_z + \frac1{k+1}
$$

Thus matrix $\mathbf{L}$ maps vector $\{s_v\}$ into vector $\{\frac1{k+1}\delta_{vx}\}$. It follows then that vector $\{\partial_v\}$ equals to $r(k+1)\{s_v\}$ since they are both mapped to the same vector $\{r\delta_{vx}\}$ by nonsingular matrix $\mathbf{L}$ (see Proposition \ref{prop:mnonsing}).

Since $r$ is positive, and all the numbers $s_v$ are positive (graph $G'$ is connected) this concludes the proof of the lemma.
\end{proofL}

\vspace{5pt}

\begin{mlemma}
\label{lem:sg1_2}
$$
\sum_{z\in \mcn(a)} \mch(z) = 2m-\deg(a)
$$
\end{mlemma}

\begin{proofL}
For every edge $\edge{x,y}$ in graph $G$ let us represent it as two directed edges with different orientations $\dir{xy}$ and $\dir{yx}$. We will mark each directed edge $\dir{xy}$ with difference $d_{xy} = \mch(x)-\mch(y)$. Sum of all these numbers is obviously zero.

However, if we group them by start vertex $x$, then for every such group except for the case of $x = a$ sum of the numbers will be, by equation (\ref{eq:maineq_graph}), equal to $\deg(x)$. For $a$ the sum is $(-\sum_{z\in \mcn(a)} \mch(z))$. It follows then (since the sum of degrees of all vertices in a graph equals twice the number of edges)
$$
0 = \sum_{xy\in \mce}d_{xy} = \sum_{z \neq a} \deg(z) - \sum_{z\in\mcn(a)} \mch(z) = 2m - \deg(a) - \sum_{z\in\mcn(a)} \mch(z)
$$
which is exactly what we need.
\end{proofL}

Now using the lemmas above we will prove Theorem \ref{thm:sg1} by induction by the number of edges $m$. Basis of induction is obvious. Now let us connect $x$ to $a$ by a shortest path; its last edge will connect some vertex $a_1$ and $a$. If $a$ has any other incident edge $e$ besides $\edge{a,a_1}$ then by removing it and discarding components of connectedness that do not contain $x$ we will reduce $G$ to graph $G'$ with fewer edges than $G$. Thus, from Lemma \ref{lem:sg1_1} and induction hypothesis,
$$
\mch_G(x) \lte \mch_{G'}(x) \lte (m-1)^2 - ((m-1)-d)^2 = 2(m-1)d-d^2 < 2md - d^2 = m^2-(m-d)^2,
$$
which proves this case. If no such edge exists then $a$ is a pendant vertex with only one incident edge $\edge{a,a_1}$.

\vvv
\begin{figure}[H]
\begin{center}
\begin{asy}
// ELLIPSIS
	real xp = 80; real yp = -20;
fill(circle((xp,yp),1));
fill(circle((xp+10,-20),1));
fill(circle((xp+20,-20),1));

// VERTICES
	real x0 = xp+45; real y0 = yp;
	real x1 = xp+60; real y1 = yp+25;
	real x2 = xp+20; real y2 = yp-15;
	real x3 = xp+85; real y3 = yp;
	real x4 = xp+125; real y4 = yp;
	real x5 = xp-20; real y5 = yp;
	real x6 = xp+30; real y6 = yp-30;
fill(circle((x0, y0),3));
fill(circle((x1, y1),3));
fill(circle((x2, y2),3));
fill(circle((x3, y3),3));
fill(circle((x4, y4),3));
fill(circle((x5, y5),3));
fill(circle((x6, y6),3));
draw(circle((x4, y4),5));

// ELLIPSE
draw(ellipse((xp+20, yp), 80, 60));
label("\Large{$G^*$}", (xp-10, yp+40));

// EDGES
draw((x0-15, y0) -- (x0, y0));
// draw((x1, y1) -- (x3, y3));
draw((x2, y2) -- (x3, y3));
draw((x0, y0) -- (x3, y3));
draw((x3, y3) -- (x4, y4));
draw((x5, y5) -- (x5+15, y5));
draw((x2, y2) -- (x6, y6));
draw((x3, y3) -- (x6, y6));
draw((x4, y4) -- (x1, y1), dotted);
draw((x1, y1) -- (x0, y0));

draw((x6, y6) -- (x6-5, y6-13));
draw((x2, y2) -- (x2-15, y2-3));
draw((x1, y1) -- (x1-12, y1+5));
draw((x1, y1) -- (x1-15, y1-5));
draw((x1, y1) -- (x1+5, y1+15));

// LABELS
label("$a_1$", (x3-5, y3-15));
label("$a$", (x4, y4-15));
label("$x$", (x5, y5-15));
label("$e$", (x4-15, y4+12));

\end{asy}
\end{center}
\end{figure}
\vvv

Now consider graph $G^*=G'\backslash a$ and assign $a_1$ as its absorbing state (vertex) to create an $\mca$-walk. From Proposition \ref{prop:mnonsing} it follows that values of $\mch_G(v)$ are the same as solutions of system of linear equations (\ref{eq:maineq_graph}), therefore solution for $G^*$ is obviously the same as solution for $G$ restricted to $G^*$, from which $\mch_G(a_1)$ is subtracted.

From Lemma \ref{lem:sg1_2} we know that $\mch_G(a_1) = 2m-1$. Since we know that $\mch_{G^*}(x,a_1) \lte (m-1)^2 - ((m-1)-(d-1))^2$ and also $\mch_G(x,a) = \mch_{G^*}(x,a_1) + \mch_G(a_1, a)$, it immediately follows that $\mch_G(x,a) \lte (m-1)^2 - (m-d)^2 + 2m-1 = m^2 - (m-d)^2$.
\end{proof}

The following theorem is an obvious corollary of the last one.

\begin{mthm}
\label{thm:sg1c}
For a simple random walk on $G$ we have $\mch(G) \lte m^2$, with equality reached if and only if $G$ is path graph $P_m$.
\end{mthm}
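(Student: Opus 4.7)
The plan is to derive Theorem~\ref{thm:sg1c} directly from Theorem~\ref{thm:sg1}, which already does essentially all the work. The inequality $\mch(G) \lte m^2$ is immediate: for every pair $x, y \in \mcv$, Theorem~\ref{thm:sg1} yields $\mch(x, y) \lte m^2 - (m-d)^2 \lte m^2$, so maximizing over pairs gives the claim.

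For the equality statement, I would first handle the forward direction. Assume $\mch(G) = m^2$ and choose a pair $(x, y)$ realizing the maximum. Then $\mch(x, y) = m^2$, and combining with the Theorem~\ref{thm:sg1} bound forces $(m - d)^2 = 0$, i.e., $d = \dist(x, y) = m$. Any shortest path from $x$ to $y$ then uses $m$ distinct edges, which must account for every edge of $G$. Since $G$ is connected and has no edges to spare, there can be no additional vertices hanging off this path; hence $G = P_m$.

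The converse is essentially the content of the opening calculation of this section: on $P_m$, the recurrence~(\ref{eq:maineq_path_delta}) was solved to give $h_k = m^2 - (m-k)^2$, so the endpoint at distance $m$ from the absorbing endpoint has hitting time exactly $m^2$, which combined with the upper bound gives $\mch(P_m) = m^2$.

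The only real subtlety — and what I would flag as the main obstacle, although it is genuinely minor — is making sure the forward equality argument rules out any extra structure on $G$. This depends crucially on the edge count being exactly $m$: a single shortest path of length $m$ already exhausts the edge budget, and connectedness then leaves no room for further vertices or edges.
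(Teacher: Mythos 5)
Your proposal is correct and follows essentially the same route as the paper, which presents Theorem~\ref{thm:sg1c} as an immediate corollary of Theorem~\ref{thm:sg1}, with the earlier computation $h_{m-k}=m^2-k^2$ on $P_m$ supplying the equality case. Your spelled-out argument that $\mch(x,y)=m^2$ forces $d=m$, so a shortest path exhausts all $m$ edges and connectedness leaves no room for further vertices, is exactly the (omitted) "obvious" step.
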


\xxx

Now we will try to generalize this fact for the graphs endowed with a so called edge-cost function. 

It is often necessary to consider a case where edges of graph have different "lengths", or where "time" to transition along an edge is not constant (originally we assumed it is always equal to 1). Formally, each edge $e=\edge{x,y}$ can be assigned some (usually non-negative) \textit{cost} $f(e)$ which is associated with traveling (transitioning) along $e$. Cost $\mcc^f(\Pi)$ of any finite path $\Pi = \left[x_1x_2\ldots x_k\right]$ is determined as the sum of costs of all edges (transitions) in that path, that is $\mcc^f(\Pi) = \sum_{i=1}^{k-1} f(\edge{x_i,x_{i+1}})$.

For instance, if $f\equiv 1$ then path's cost is simply its length. In the example below we show cost function $f$ presented as numbers written next to the edges, and path $\Pi = [qrts]$ (shown with thicker lines) with cost $\mcc^f(\Pi) = f(\edge{q,r})+f(\edge{r,t})+f(\edge{t,s})=2+5+1 = 8$.

\vvv
\begin{figure}[H]
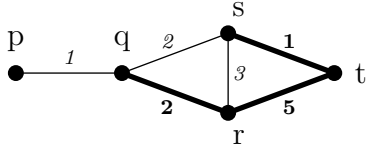

\begin{center}
\begin{asy}
// VERTICES
real x00 = 100; real y00 = -20; fill(circle((x00,y00),3)); // 0
real x01 = 140; real y01 = -20; fill(circle((x01,y01),3)); // 1
real x02 = 180; real y02 = -35; fill(circle((x02,y02),3)); // 2
real x03 = 180; real y03 = -5;  fill(circle((x03,y03),3)); // 3
real x04 = 220; real y04 = -20; fill(circle((x04,y04),3)); // 4

// EDGES
draw((x00, y00) -- (x01, y01)); // 0-1
draw((x01, y01) -- (x02, y02), linewidth(2pt)); // 1-2
draw((x01, y01) -- (x03, y03)); // 1-3
draw((x02, y02) -- (x03, y03)); // 2-3
draw((x02, y02) -- (x04, y04), linewidth(2pt)); // 2-4
draw((x03, y03) -- (x04, y04), linewidth(2pt)); // 3-4

// LABELS
label("p", (x00, y00+12));    // 0
label("q", (x01, y01+12));    // 1
label("r", (x02+4, y02-9)); // 2
label("s", (x03+4, y03+9)); // 3
label("t", (x04+10, y04));   // 4

label("{\scriptsize\it 1}", (x00+20, y00+6));    // 0-1
label("{\scriptsize\it 2}", (x01+17, y01+12));   // 1-2
label("{\scriptsize\bf 2}", (x01+17, y01-13));   // 1-3
label("{\scriptsize\it 3}", (x02+4,  y01));      // 2-3
label("{\scriptsize\bf 1}", (x02+23, y01+12));   // 2-4
label("{\scriptsize\bf 5}", (x02+23, y01-13));   // 3-4

\end{asy}
\caption{"Cost of path" example}
\end{center}
\end{figure}
\vvv

In real-life computational problems this is a very common occurrence. Time to transition (travel) along an edge (or some cost associated with that transition) is often non-constant and it has to be taken into consideration when computing total time (or some other type of "expense") to travel from one point to another.

Access (hitting) time $\mch^f(x, y) = \mch^f_G(x, y)$ relative to cost function $f$ is defined as expected value of cost function for random walk's path that starts in $x$ and stops when it reaches (is absorbed by) vertex $y$. Just as before, $\mch^f(G)$ is defined as $\max_{x,y\in G}\mch^f(x,y)$.

\begin{mthm}
\label{thm:fg1}
For a simple random walk on $G$, for any non-negative edge-cost function $f$ and any vertices $x,y\in\mcv$ inequality
$$
\mch^f(x,y)\ \lte\ \sum_{e\in \mce}\left(2\dist_y(e)+1\vphantom{\mch^f}\right)f(e)\ \lte\ m^2 \max(f)
$$
holds true.
\end{mthm}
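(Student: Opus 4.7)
The plan is to reduce the first inequality to a per-edge bound on expected traversal counts. For the simple walk from $x$ absorbed at $y$, let $N_e(x, y)$ denote the expected number of times edge $e$ is traversed (in either direction) before absorption; then $\mch^f(x, y) = \sum_{e \in \mce} N_e(x, y)\,f(e)$, and since the target bound is linear in $f \gte 0$, it suffices to show $N_e(x, y) \lte 2\,\dist_y(e) + 1$ for every $e$. Furthermore the walker cannot traverse $e = \edge{u, v}$ before visiting $\{u, v\}$, so the strong Markov property applied at the first-hit time of this pair gives $N_e(x, y) \lte \max(N_e(u, y),\,N_e(v, y))$; it remains to bound $N_e$ started from an endpoint of $e$.

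Take $u$ to be the endpoint of $e$ closer to $y$, so $d := \dist_y(e) = \dist(u, y)$ and $\dist(v, y) \lte d + 1$. Writing $G(\cdot, \cdot)$ for the Green's function of the walk absorbed at $y$, one has
\[
N_e(u, y) \;=\; \frac{G(u, u)}{\deg(u)} + \frac{G(u, v)}{\deg(v)}.
\]
The reversibility identities $G(w, w) = \deg(w)\,R_{\mathrm{eff}}(w, y)$ and $G(u, v) = \mathbb{P}_u[\tau_v < \tau_y]\,G(v, v) \lte G(v, v)$ yield $N_e(u, y) \lte R_{\mathrm{eff}}(u, y) + R_{\mathrm{eff}}(v, y)$, and Rayleigh's bound $R_{\mathrm{eff}}(w, y) \lte \dist(w, y)$ (the same ingredient underlying the paper's earlier remark that $\mcc(x, y) \lte 2md$) then gives $N_e(u, y) \lte d + (d + 1) = 2d + 1$; the bound on $N_e(v, y)$ is symmetric.

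For the second inequality I would factor out $\max(f)$ and prove the purely combinatorial bound $\sum_{e \in \mce} \dist_y(e) \lte m(m-1)/2$. Sorting the edges so that $\dist_y(e_1) \lte \cdots \lte \dist_y(e_m)$, I observe that for each $k$ strictly less than the eccentricity of $y$, any shortest path in $G$ from $y$ to a vertex at distance $k + 1$ supplies distinct edges with $\dist_y$ values $0, 1, \ldots, k$; hence at least $k + 1$ edges have $\dist_y \lte k$, forcing $\dist_y(e_j) \lte j - 1$. Summing, $\sum_e \dist_y(e) \lte \binom{m}{2}$, so $\sum_e (2\dist_y(e) + 1) \lte m^2$, from which the second inequality follows.

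The main obstacle lies in the electrical-network step of the second paragraph: the identities invoked there are classical (cf.\ Doyle--Snell, Aldous--Fill) but have not yet been developed in the paper's text, and are instead signposted as the route for the asymmetric generalization to come. A fully elementary alternative would mirror the induction on $m$ of Theorem \ref{thm:sg1}: the pendant case goes through cleanly via $\mch^f_G(x, y) = \mch^f_{G^*}(x, y_1) + \mch^f_G(y_1, y)$ together with the cost-weighted analogue $\mch^f_G(y_1, y) = 2\sum_e f(e) - f(e_*)$ of Lemma \ref{lem:sg1_2}, but the non-pendant step is subtle because the monotonicity $\mch^f_G \lte \mch^f_{G'}$ under removal of an edge incident to $y$ used in Theorem \ref{thm:sg1} can fail once $f$ is non-constant.
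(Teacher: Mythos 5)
Your argument is correct, and it follows the paper's skeleton up to the decisive estimate: both proofs use linearity in $f$ to reduce the theorem to the per-edge bound ``expected number of traversals of $e$ is at most $2\dist_y(e)+1$,'' and both decompose the traversal count of $e=\edge{u,v}$ into vertex-visit counts weighted by $1/\deg(u)$, $1/\deg(v)$ (the paper's identity $\mbs_e=p_{uv}\mbs_u+p_{vu}\mbs_v$, proved there by harmonicity). Where you diverge is in how the vertex-visit bound is obtained. The paper proves, by an elementary self-contained argument (edge-deletion monotonicity as in Lemma~\ref{lem:sg1_1}, the directed-edge grouping trick of Lemma~\ref{lem:sg1_2}, and induction along a shortest path), the uniform statement $\mbs_x(v)\lte \deg(x)\cdot\dist_a(x)$ for all starting vertices $v$ (Lemma~\ref{lem:sb_ineq}). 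You instead reduce to a start at an endpoint via the strong Markov property and invoke the classical electrical identities $G(w,w)=\deg(w)\,R_{\mathrm{eff}}(w,y)$, $G(u,v)\lte G(v,v)$, and Rayleigh's bound $R_{\mathrm{eff}}(w,y)\lte\dist(w,y)$ --- note these say exactly $G(w,w)\lte\deg(w)\dist(w,y)$, i.e.\ the content of Lemma~\ref{lem:sb_ineq} at the relevant vertex. Your route is shorter but imports network theory that the paper only introduces in the following section (and cites rather than proves), whereas the paper's route is self-contained and sets up the machinery reused later for the asymmetric case (Theorem~\ref{thm:fsgc}). Two further points in your favor: you supply an explicit proof of the second inequality via the sorted-distance count $\dist_y(e_j)\lte j-1$, which the paper leaves as an exercise (and uses in the remark after the theorem), and your closing diagnosis is accurate --- the naive induction of Theorem~\ref{thm:sg1} does fail for nonconstant $f$, exactly as the paper's counterexample in Figure~\ref{fig:counterex_fc} shows.
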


\begin{proof}

As before, we will rename $y$ to $a$ and make it absorbing, converting regular random walk to an $\mca$-walk.

Second inequality can be left to the reader as an easy exercise (incidentally, the inequality between the first and the last expressions immediately follows from Theorem \ref{thm:sg1c}). We only need to prove the first inequality. It would seem we can simply reuse the proof of Theorem \ref{thm:sg1} using function $\deg^f(x) = \sum_{y\in\mcn(x)}f(\edge{x,y})$ instead of $\deg(x)$, and system
\begin{align}
\begin{cases}
\label{eq:maineq_graphf}
h_x - \displaystyle{\sum_{z \in \mcn(x)}p_{xz}\cdot h_z} &= \displaystyle{\sum_{z \in \mcn(x)}p_{xz}f(xz)},\quad x \neq a \\
h_a                                                    &= \quad 0
\end{cases}
\end{align}
instead of \ref{eq:maineq_graph}. Alas, that is not possible (at least not in the most direct manner) because Lemma \ref{lem:sg1_1} is not valid for arbitrary cost function $f(e)$. As a very simple example consider the following graph $G$ with graph $G'$ produced by erasing edge $\edge{a,c}$ from $G$.

\vvv
\begin{figure}[H]
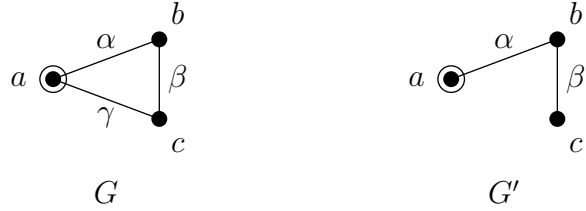

\begin{center}
\begin{asy}
// --- graph 1
// VERTICES
real x11 = 15;  real y11 = -20;  fill(circle((x11,y11),3));  // 1
real x12 = 55;  real y12 = -5;   fill(circle((x12,y12),3));  // 2
real x13 = 55;  real y13 = -35;  fill(circle((x13,y13),3));  // 3
draw(circle((x11,y11),5));

// EDGES
draw((x11, y11) -- (x12, y12)); // 1-2
draw((x11, y11) -- (x13, y13)); // 1-3
draw((x12, y12) -- (x13, y13)); // 2-3

// LABELS
label("$\alpha$", ((x11+x12)/2, (y11+y12)/2 + 7)); // 1-2
label("$\beta$", ((x12+x13)/2 + 7, (y12+y13)/2)); // 2-3
label("$\gamma$", ((x11+x13)/2, (y11+y13)/2 - 7)); // 1-3
label("$a$", (x11-13, y11));    // 1
label("$b$", (x12+7,  y12+10)); // 2
label("$c$", (x13+7,  y13-10)); // 3

// --- graph 2
// VERTICES
real s = 150;
real x21 = x11 + s;  real y21 = y11;  fill(circle((x21,y21),3));  // 1
real x22 = x12 + s;  real y22 = y12;  fill(circle((x22,y22),3));  // 2
real x23 = x13 + s;  real y23 = y13;  fill(circle((x23,y23),3));  // 3
draw(circle((x21,y21),5));

// EDGES
draw((x21, y21) -- (x22, y22)); // 1-2
draw((x22, y22) -- (x23, y23)); // 2-3

// LABELS
label("$\alpha$", ((x21+x22)/2, (y21+y22)/2 + 7)); // 1-2
label("$\beta$", ((x22+x23)/2 + 7, (y22+y23)/2)); // 2-3
label("$a$", (x21-13, y21));    // 1
label("$b$", (x22+7,  y22+10)); // 2
label("$c$", (x23+7,  y23-10)); // 3

label("$G$",  ((x11+x12)/2, (y11+y13)/2 - 35));
label("$G'$", ((x21+x22)/2, (y21+y23)/2 - 35));
\end{asy}
\end{center}
\caption{Counterexample for "generalized" Lemma \ref{lem:sg1_1}}
\label{fig:counterex_fc}
\end{figure}
\vvv

Here $\alpha$, $\beta$ and $\gamma$ are some arbitrary positive numbers -- values of cost function $f$. Solving system \ref{eq:maineq_graphf} we obtain $H^f_G(c,a) = \beta + (\alpha+2\gamma)/3$ and $H^f_{G'}(c,a) = \alpha+3\beta$ and, if $\gamma > \alpha+3\beta$ then hitting time for vertex $c$ has actually decreased after deleting edge $\edge{a,c}$.

\vspace{10pt}

However we can still reuse some ideas from Lemma \ref{lem:sg1_1}. Let us define linear operator $\mathbf{P_G}: \mbr^m \rightarrow \mbr^n$ by formula
$$
\mathbf{P_G}:f\rightarrow s=\{s_x\}, \quad s_x = \sum_{z \in \mcn(x)}p_{xz}f(xz)
$$

Thus on the right side of system \ref{eq:maineq_graphf} we have vector $\mathbf{P_G}(f)$ with $f$ considered as a vector in $\mbr^m$. Therefore, for every vertex $x$ we have 
$$
\mch^f_G(x) = (\mathbf{L}^{-1} \mathbf{P_G}) (f) = \sum_{e\in\mce}h_{x, e}f(e)
$$
that is, hitting time for vertex $x$ is a linear combination of edge costs with some coefficients that depend only on graph $G$, vertex $x$ and edge $e$. We will denote these coefficients as $h_{x,e}$ and the preceding equation serves as their definition. To finalize the proof we need to show that the following inequality
\begin{align}
\label{eq:hve}
h_{x,e} \lte 2\,\dist_a(e)+1
\end{align}
holds true for any vertex $x$ and edge $e$.

To start with, it is obvious that $h_{x,e}$ is the same as expected value of the number of times that random walk starting at $x$ passes through edge $e$ before it reaches $a$. To show that, simply use vector (edge-cost function) $f(e') = \delta_{ee'}$.

Thus, if $\dist_a(e) = 0$ (that is, $e$ and $a$ are incident) then $h_{x,e} \lte 1$ (you cannot walk through $e$ more than once) proving inequality \ref{eq:hve} for this case.

Now for any vertex $x\in \mcv$ or any edge $e\in \mce$ we will define functions
\begin{align}
\mbs_x(v,a)&: \mcv \times \mcv \rightarrow \mathbb{Z}^+ \nncr
\mbs_e(v,a)&: \mcv \times \mcv \rightarrow \mathbb{Z}^+ \nn
\end{align}
as the expected value of the number of times that random walk starting from $v$ and stopping having reached $a$ will visit $x$ or pass through $e$, respectively. In case when vertex $a$ is fixed we will  use simplified notation $\mbs_x(v)$ or $\mbs_e(v)$. 

So we can reformulate our theorem as inequality $\mbs_e \lte 2\dist_a(e)+1$. If $e = \edge{x,z}$ then consider function $\Omega = p_{xz}\mbs_z + p_{zx}\mbs_z - \mbs_e$. It is very easy to check that
\begin{align}
\label{eq:omega}
\Omega(v) - \displaystyle{\sum_{u \in \mcv}p_{vu}\cdot \Omega(u)} & = 0, \quad \forall v\in \mcv
\end{align}
From that it is not difficult to see that $\Omega \equiv 0$. First, $\Omega(a) = 0$. Second, if $\Omega$ is not a zero function then at some vertex $v^*\neq a$ we have $|\Omega(v^*)|$ reaching its maximum. From \ref{eq:omega} it follows then that $\Omega(u)$ must have the same value as $\Omega(v^*)$ for all $u\in \mcn(v^*)$, and then $\Omega$ has the same value in all the neighbors of those vertices as well etc. Since $G$ is connected then $\Omega$ must be constant non-zero function on $G\backslash a$ which gives us an obvious contradiction with \ref{eq:omega}.

Equation \ref{eq:omega} basically says that the value of the function in a vertex is equal to the mean of the values in its neighbors. This is a so-called \textit{harmonicity} equation (or property).

Therefore $\mbs_e = p_{xz}\mbs_z + p_{zx}\mbs_z$ (or we could simply say that any transition through edge $e$ involves either walking to $x$ and then transitioning from $x$ to $z$, or walking to $z$ and transitioning from $z$ to $x$).

\begin{mlemma}
\label{lem:sb_ineq} For any vertex $x \neq a$ function $\mbs_x$ satisfies the following inequality
$$
\mbs_x(v) \lte \deg(x)\cdot \dist_a(x)
$$
\end{mlemma}

\begin{proofL}

This inequality can indeed be proved more or less the same way we did Lemma \ref{lem:sg1_1}. First, use the same reasoning to prove that function $\mbs_x(v)$ cannot decrease when we delete any edge $\edge{a,x}$ such that $a$ and $x$ are still connected in the resulting graph. Then we choose shortest path $\Pi = \left[a_0a_1a_2\ldots a_k\right]$ from $a = a_0$ to $x = a_k$ (where $k = \dist_a(x)$), and remove all edges coming out of $a$ except $\edge{a,a_1}$. In this "updated" graph we can add up expressions $\mbs_x(u)-\mbs_x(v)$ along all directed edges $\dir{uv}$ to show that $\mbs_x(a_1) = \deg(x)$ (using same "grouping" trick as in Lemma \ref{lem:sg1_2}). Then "moving" along path $\Pi$ we  prove that at each step difference $\mbs_x(a_i)-\mbs_x(a_{i-1})$ is at most $\deg(x)$ and therefore $\mbs_x(x) \lte \deg(x)\cdot\dist_a(x)$. Since function $\mbs_x(u)$ obviously attains its maximum in $u=x$, the lemma is therefore proved.
\end{proofL}

From this immediately follows the theorem's proof. 
\begin{align}
\mbs_e(u) &= p_{xz}\mbs_x(u) + p_{zx}\mbs_z(u) \nncr
          &\lte  p_{xz}\deg(x)\cdot\dist_a(x) + p_{zx}\deg(z)\cdot\dist_a(z) \nncr
          &= \dist_a(x)+\dist_a(z) \lte 2\cdot \dist_a(e)+1 \nn
\end{align}
since $p_{xz} = 1/\deg(x)$, $p_{zx} = 1/\deg(z)$.
\end{proof}

This inequality gives us another proof of Theorem \ref{thm:sg1c}. Indeed, let us arrange all edges in $G$ by their distance from $a$ in ascending order and index them correspondingly $e_1$, $e_2$, \ldots, $e_m$. Thus for $f\equiv 1$ 
$$
\mch(G) \lte \sum_{k=1}^m\left(2\dist_a(e_k)+1\right) \lte \sum_{k = 1}^m(2k-1) = m^2.
$$
because $G$ is connected and therefore $\dist_a(e_k) \lte k-1$. Once again, equality is attained only for path graph $P_m$.

\vvv

On a separate note -- hitting probability function $\mbs_x(u)$ can be easily computed for the case of a tree. In a tree for any two vertices there is a unique non-self-intersecting path that connects them; it also serves as the shortest path between these two vertices. Let $\Pi = \left[a_0a_1a_2\ldots a_k\right]$ be such a path from $a = a_0$ to $x = a_k$ (where $k = \dist_a(x)$), and $\Phi$ be such a path from $u$ to $a$. 

\vvv
\begin{figure}[H]
\begin{center}
\begin{asy}
fill(circle((-25,-20),3));
fill(circle((-25,-80),3));
draw((-25, -20) -- (15, -40));
draw((-25, -80) -- (15, -40), dashed);
label("$x$", (-22,-93));

draw((-25, -80) -- (-45, -40));
fill(circle((-45,-40),3));
draw((-25, -80) -- (-60, -70));
fill(circle((-60,-70),3));
draw((-25, -80) -- (-55, -90));
fill(circle((-55,-90),3));

fill(circle((15,-40),3));
draw((15, -40) -- (55, -40), dashed);
fill(circle((55,-40),3));
draw((55, -40) -- (80, -40), dashed);

// elipsis
fill(circle((95,-40),1));
fill(circle((102,-40),1));
fill(circle((109,-40),1));

label("$z = a_d$", (65,-55));

draw((55, -40) -- (50, -10));
fill(circle((50,-10),3));
draw((50, -10) -- (30, 10));
fill(circle((30,10),3));
draw((50, -10) -- (70, 0));
fill(circle((70,0),3));
draw(ellipse((50, 0), 55, 25));
label("$\mathcal C$", (110, 10));
label("$u$", (40,12));

draw((125, -40) -- (165, -40), dashed);
fill(circle((165,-40),3));
draw((165, -40) -- (175, -70));
fill(circle((175,-70),3));
draw((165, -40) -- (140, -70));
fill(circle((140,-70),3));

draw((165, -40) -- (205, -40), dashed);
fill(circle((205,-40),3));
// draw(circle((205,-40),5));

label("$a$", (205,-55));

// "box" around path a--x
draw((-26,-89)--(17,-47)--(212,-47)--(212,-33)--(12,-33)--(-35,-80)--(-26,-89), dotted);
label("$path\dmhs\Pi$", (10,-66));

// "box" around path a--u
draw((50,-44)--(210,-44)--(210,-36)--(59,-36)--(55,-9)--(30,17)--(23,10)--(45,-12)--(50,-44), dotted);
label("$path\dmhs\Phi$", (22,-5));

\end{asy}
\end{center}
\end{figure}
\vvv

Then value of $\mbs_x(u)$ can be computed by the following formula
$$
\mbs_x(u) = d\cdot \deg(x), \quad d = \max\{i : a_i \in \Pi\cap\Phi\}
$$
which can be visually represented in the following manner: if we (mentally) remove path $\Pi$ from $G$ then $G\backslash\Pi$ turns into a disjoint union of several connected components such that in every one of them all vertices share the same \textit{"point of entrance into $\Pi$"}, say, $z$ ($z = a_d$). Then in all the vertices of this component function $\mbs_x$ has the same value equal to $\mbs_x(z) = d\cdot \deg(x)$.



\msection{Some results for asymmetric random walks}

Now we will turn to the case of edge-weighted functions or asymmetric random walks.

We will use a simple measure of \textit{asymmetry} (or \textit{transitional bias}) for any edge-weight function $w$ (or a random walk). We will denote it $\tau=\tau_w$; it is defined as
$$
\tau_w = \max_{x\in\mcv}\max_{y, z\in\mcn(x)}\frac{w(x,y)}{w(x,z)}
$$
or, alternatively, as
$$
\tau_w = \max\left(\frac{p_{xz}}{p_{xy}}: x,y,z\in \mcv,\: p_{xy}>0\right).
$$
That is, for each vertex we compute the maximum ratio between non-zero transition probabilities from this vertex, and then we find maximum among those values.

There is only one special case where $\tau$ is undefined for a connected graph -- when it has only one vertex. This is a trivial case and clearly we know all there is to know about this walk/graph's properties including hitting times. For all other random walks $\tau\gte 1$, and $\tau=1$ if and only if random walk is symmetric (edge-weight function is constant on $\mce$).

We will undertake a somewhat different approach although a few lemmas will be reused. Thankfully, we are not constrained by the requirements of space here, and I care much more for keeping the reader's interest alive than for brevity. I also (educator's bias, perhaps) dislike writing dense and less readable proofs of general cases instead of explaining main ideas for some natural special case and then expanding or generalizing the proof.

I should say that generally symmetric walks suffice for all standard computational algorithms. Need for asymmetric walks is rare since they seem to be of limited usability in terms of computer (or complexity theory) applications.

First of all, defining them in a suitable manner is often messy. Since random walks are  generated from \textit{edge-weighted graphs}, that means we have to be able to quickly compute values of the edge-weight function. If you define them differently that might require some considerable extra storage which has to be accessed at every turn.

Also, as we will shortly see, upper bounds for hitting time involve exponential functions of $m$ such as $\tau^{m-1}$. Graphs to which we apply the algorithms of this sort usually contain many thousands (or even millions) of vertices and edges, therefore making a $\tau^{m-1}$-type estimate almost absolutely useless. Still, such random walks present an interesting challenge at least mathematically; and perhaps the facts we will prove might turn out to be of some use for the algorithm theory in the future.

\xxx

Now let us describe in a nutshell the very useful connection between random walks on finite graphs and electric networks.

Electric network $E$ is simply a finite connected undirected graph in which each undirected edge $e = \edge{u,v}$ has positive \textit{resistance} $r_e = r_{uv}>0$ (and \textit{conductance} $c_e = c_{uv} = 1/r_{uv}$). We allow current to flow between vertices of this construct by, for example, setting voltage at vertex $a$ to zero, and voltage at $x$ to 1. Of course the current flow has to comply with basic laws of electricity such as Kirchhoff's and Ohm's laws. In "physical reality" we would select two points $a$ and $x$ and attach point $a$ to earth and point $x$ to a current source, and let the electricity flow in accordance with laws of nature.

\vvv
\begin{figure}[H]
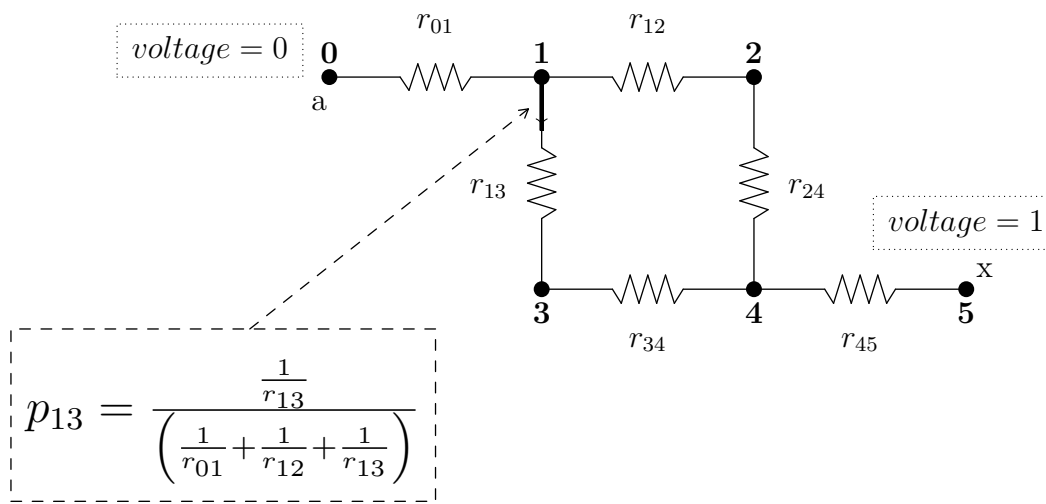

\begin{center}
\begin{asy}
// VERTICES
real x0 = 20;  real y0 = 0;   fill(circle((x0,y0),3)); // 0
real x1 = 100; real y1 = 0;   fill(circle((x1,y1),3)); // 1
real x2 = 180; real y2 = 0;   fill(circle((x2,y2),3)); // 2
real x3 = 100; real y3 = -80; fill(circle((x3,y3),3)); // 3
real x4 = 180; real y4 = -80; fill(circle((x4,y4),3)); // 4
real x5 = 260; real y5 = -80; fill(circle((x5,y5),3)); // 4

real dx3 = (x1-x0)/3;
real dy3 = (y1-y3)/3;
real dxs = dx3/8;
real dys = (y3-y1)/16;

// EDGES
// draw((x0, y0) -- (x1, y1)); // 0-1
draw((x0, y0) -- (x0+dx3, y1));
draw((x0+2*dx3, y0) -- (x1, y1));
real xc = x0+dx3;
real yc = y0;
for (real ii = 0; ii < 8; ii = ii+1)
{
	if (ii == 7)
		dys = 0;
	draw((xc, yc) -- (xc+dxs, y0+dys));
	xc = xc + dxs;
	yc = y0+dys;
	dys = -dys;
}

// draw((x1, y1) -- (x2, y2)); // 1-2
draw((x1, y1) -- (x1+dx3, y2));
draw((x1+2*dx3, y1) -- (x2, y2));
xc = x1+dx3;
yc = y1;
dys = (y3-y1)/15;
for (real ii = 0; ii < 8; ii = ii+1)
{
	if (ii == 7)
		dys = 0;
	draw((xc, yc) -- (xc+dxs, y0+dys));
	xc = xc + dxs;
	yc = y0+dys;
	dys = -dys;
}

// draw((x3, y3) -- (x4, y4)); // 3-4
real dxs = dx3/8;
draw((x3, y3) -- (x3+dx3, y3));
draw((x3+2*dx3, y4) -- (x4, y4));
xc = x3+dx3;
yc = y3;
dys = (y3-y1)/15;
for (real ii = 0; ii < 8; ii = ii+1)
{
	if (ii == 7)
		dys = 0;
	draw((xc, yc) -- (xc+dxs, y3+dys));
	xc = xc + dxs;
	yc = y3+dys;
	dys = -dys;
}

// draw((x4, y4) -- (x5, y5)); // 4-5
real dxs = dx3/8;
draw((x4, y4) -- (x4+dx3, y4));
draw((x4+2*dx3, y4) -- (x5, y5));
xc = x4+dx3;
yc = y4;
dys = (y3-y1)/15;
for (real ii = 0; ii < 8; ii = ii+1)
{
	if (ii == 7)
		dys = 0;
	draw((xc, yc) -- (xc+dxs, y4+dys));
	xc = xc + dxs;
	yc = y4+dys;
	dys = -dys;
}

//draw((x1, y1) -- (x3, y3)); // 1-3
draw((x1, y1) -- (x3, y1-dy3));
draw((x3, y3+dy3) -- (x3, y3));
xc = x1;
yc = y1-dy3;
dxs = (x1-x0)/15;
dys = dy3/8;
for (real ii = 0; ii < 8; ii = ii+1)
{
	if (ii == 7)
		dxs = 0;
	draw((xc, yc) -- (x1+dxs, yc-dys));
	xc = x1+dxs;
	yc = yc-dys;
	dxs = -dxs;
}

// draw((x2, y2) -- (x4, y4)); // 2-4
draw((x2, y2) -- (x4, y2-dy3));
draw((x2, y4+dy3) -- (x4, y4));
xc = x2;
yc = y2-dy3;
dxs = (x1-x0)/15;
dys = dy3/8;
for (real ii = 0; ii < 8; ii = ii+1)
{
	if (ii == 7)
		dxs = 0;
	draw((xc, yc) -- (x2+dxs, yc-dys));
	xc = x2+dxs;
	yc = yc-dys;
	dxs = -dxs;
}

// LABELS
label("\textbf{0}", (x0, y0+9));   // 0
label("\textbf{1}", (x1, y1+9));   // 1
label("\textbf{2}", (x2, y2+9));   // 2
label("\textbf{3}", (x3, y3-9));   // 3
label("\textbf{4}", (x4, y4-9));   // 4
label("\textbf{5}", (x5, y5-9));   // 5

label("a", (x0-4, y0-10));
label("x", (x5+7, y5+7));

label("$voltage=0$", (x0-45, y0+10));   // 0
draw(box((x0-80,y0), (x0-10, y0+20)), dotted);
label("$voltage=1$", (x5, y5+25));   // 4
draw(box((x5-35,y5+15), (x5+35, y5+35)), dotted);
label("$r_{01}$", (x0+40, y0+20));    // 0-1
label("$r_{12}$", (x1+40, y1+20));    // 1-2
label("$r_{13}$", (x1-20, y1-42));    // 1-3
label("$r_{24}$", (x2+20, y2-42));    // 2-4
label("$r_{34}$", (x3+40, y3-20));    // 3-4
label("$r_{45}$", (x4+40, y4-20));    // 4-5

// BIG FAT LABEL WITH ARROWS
label(scale(1.5)*"$p_{13} = \frac{\frac1{r_{13}}}{\left(\frac1{r_{01}}+\frac1{r_{12}}+\frac1{r_{13}}\right)}$", (x0-40, y3-50));
draw(box((x0-120,y3-80), (x1-40, y3-15)), dashed);
draw((x0-30, y3-15) -- (x1-5, y1-12), arrow=ArcArrow(SimpleHead), dashed);
draw((x1, y1) -- (x1, y1-18), arrow=Arrow(TeXHead));
real thk = 0.6;
filldraw((x1-thk, y1) -- (x1-thk, y1-20) -- (x1+thk, y1-20) -- (x1+thk, y1) -- cycle, black);

\end{asy}
\end{center}
\caption{Example of an electric network}
\end{figure}
\vvv

It turns out that if we define transition probabilities for edges of $E$ in the following way
$$
p_{uv} = \frac{c_{uv}}{C_u} , \quad \mathrm{where }\ \ C_u = \sum_{w\in\mcn(u)} c_{uw}
$$
(that is, we are using conductances of the edges as their "weights") then the resulting random walk is very "intimately" connected with properties of the underlying electric network $E$. Namely, when voltages at $a$ and $x$ are fixed as described above they uniquely determine voltages in all other points of the network, and voltage $\mbv_u$ in vertex $u$ equals probability $\mbp_u$ that random walk starting in vertex $u$ will pass through vertex $x$ before reaching $a$. Many other interesting facts follow. For instance, stationary probability distribution $\mathbf{f} = \{\mathbf{f}_u\}$ for this  random walk can be computed as
$$
\mathbf{f}_u = \frac{C_u}{\sum_{w\in E}C_w}
$$
(see \cite{DoyleSnell}, or \cite{HopKan}).

The statement about voltages $\mbv_u$ can be proved using function $\Omega(u) = \mbv_u-\mbp_u$ and checking that it complies with harmonicity equations \ref{eq:omega} for vertices different from $a$ and $x$ while vanishing in $a$ and $x$. From that it easily follows that $\Omega \equiv 0$. This is the same approach (employing discrete harmonic functions on $E$ without calling them that) which we already used before in the proof of Theorem \ref{thm:fg1}. More on the harmonic functions on graphs can be read in \cite{HopKan} or \cite{LyonsPeres}.

Some well known properties of finite electric networks can be applied to prove various facts about random walks. Among such properties is Raleigh's Monotonicity Law (increasing some edge resistances in electric network can only increase effective resistances between any two points) and some of its corollaries. One of them states that cutting an edge (with the network staying connected) can only increase all effective resistances in the network. The other one says that replacing vertex $x$ with two vertices $x'$ and $x''$ connected by edge $\edge{x',x''}$ (with some positive resistance) with some of the edges $\edge{x,u}$ being "moved" to $x'$ (turning into $\edge{x',u}$) and some -- to $x''$ ($\edge{x,u}$ becomes $\edge{x'',u}$), also can only increase the resistance values. Vice versa, shrinking any edge $\edge{x,y}$ into one vertex $z$ -- with corresponding changes in adjacency and incidence -- can only decrease remaining effective resistance values (and therefore, increase conductance). 

\xxx

We will use two simple functions of two variables $m$ and $t$, where $m$ will later represent the number of edges in the graph, and $t$ will represent walk's asymmetry $\tau$. 
\begin{align}
\mcf(t, m) \dmhs &= \dmhs \sum^{m-1}_{k=0}c_kt^k \dmhs = \dmhs \frac{2t^{m+1}-mt^2-2t+m}{(t-1)^2} \nncr
\mcp(t, m) \dmhs &= \dmhs 2\sum_{k=1}^{m-1}t^k+1 \dmhs = \dmhs \frac{2(t^m-1)}{t-1}-1     \nn
\end{align}
where $c_0 = m$, $c_k = 2(m-k)$, $k = 1,\ldots, m-1$. Both $\mcf$ and $\mcp$ are integer polynomials of $t$ with coefficients which are integer polynomials of $m$.

We will need the following easily deducible properties of functions $\mcf$ and $\mcp$. 

\begin{enumerate}[a)]
\item Both functions are positive and monotonically increasing for $t\gte0$ and $m\gte0$; \label{itm:fp_mono}
\vspace{3pt}
\item $\mcf(t, m) = \sum_{k=1}^m\mcp(t, k)$ for $m\gte1$;  \label{itm:fp_fsump}
\vspace{3pt}
\item $\mcp(t, a+b) \gte \mcp(t, a) + \mcp(t, b) + 1 $ for $t \gte 1$; \label{itm:fp_sum}
\vspace{3pt}
\item $\mcp(t, m+1) = t\mcp(t, m) + t + 1 $.  \label{itm:fp_pnext}
\end{enumerate}

Now let us begin with path tree $P_m$. We denote probability of moving left (away from $a_0$) for vertex $a_k$ as $p_k$ and probability of moving right (toward $a_0$) as $q_k$, where $q_k=1-p_k, \: k > 0$. In other words, $p_{a_ka_{k+1}} = p_k, \: m > k \gte 0$ and $p_{a_ka_{k-1}} = q_k, \: 0 < k \lte m$.

\begin{mprop}
\label{prop:pt_ineq}
For any random walk on $P_m$ the inequality for maximum hitting time $\mch(P_m) \lte \mcf(\tau, m)$ holds true.
\end{mprop}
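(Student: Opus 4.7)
The plan is to reduce the general hitting-time maximum on $P_m$ to the case where the absorbing vertex is an endpoint, and then to solve a one-dimensional recurrence in the differences of consecutive hitting times. If some interior $a_j$ is declared absorbing, the path splits into two subpaths of lengths $j$ and $m-j$; by the monotonicity property~(a) of $\mcf$ it is enough to treat the endpoint case, so I fix $a=a_0$, write $h_k=\mch(a_k)$, and set $d_k = h_k - h_{k-1}$ for $1\lte k\lte m$.

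The leaf equation at $a_m$ gives $h_m = 1 + h_{m-1}$, hence $d_m=1$. The interior equation $h_k = 1 + p_k h_{k+1} + q_k h_{k-1}$ rearranges to
\[
d_k \;=\; \frac{p_k}{q_k}\, d_{k+1} + \frac{1}{q_k}, \qquad 1\lte k\lte m-1.
\]
By definition of the asymmetry, $p_k/q_k \lte \tau$ and $q_k/p_k \lte \tau$; combined with $p_k+q_k=1$, these force $q_k \gte 1/(\tau+1)$, so $1/q_k \lte \tau+1$. Since $d_m>0$, a trivial induction on $k$ shows $d_k>0$ for every~$k$, so in turn
\[
d_k \lte \tau\, d_{k+1} + (\tau+1), \qquad 1\lte k\lte m-1.
\]

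This recurrence is matched exactly by property~(d) of $\mcp$, namely $\mcp(\tau,j+1) = \tau\mcp(\tau,j) + \tau + 1$. Starting from $d_m = 1 = \mcp(\tau,1)$, backward induction on $j$ yields $d_{m-j} \lte \mcp(\tau, j+1)$ for $0\lte j\lte m-1$. Positivity of every $d_k$ also implies that $h_k$ is strictly increasing, so $h_m = \max_k h_k$; summing the differences,
\[
h_m \;=\; \sum_{k=1}^m d_k \;\lte\; \sum_{j=1}^m \mcp(\tau, j) \;=\; \mcf(\tau,m),
\]
by property~(b), and combining with the opening reduction gives $\mch(P_m)\lte\mcf(\tau,m)$.

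The main obstacle is more bookkeeping than insight: one must check that the asymmetry bound $1/q_k \lte \tau+1$ genuinely applies at every interior vertex (it does, because such a vertex has two neighbors, which is what the definition of $\tau$ controls), and that the leaf at $a_m$ supplies the correct initial value $d_m=\mcp(\tau,1)=1$ needed to launch the induction. Once the difference variables $d_k$ are introduced, the entire argument is driven by the observation that the polynomial recurrence built into $\mcp$ via property~(d) is exactly what governs their growth.
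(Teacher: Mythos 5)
Your proof is correct and follows essentially the same route as the paper: reduce to an absorbing endpoint, pass to the differences $d_k$, derive $d_k \lte \tau d_{k+1} + (\tau+1)$, and sum using the recurrence $\mcp(\tau,j+1)=\tau\mcp(\tau,j)+\tau+1$ together with $\mcf(\tau,m)=\sum_{j=1}^m \mcp(\tau,j)$. The only differences are cosmetic: you justify the endpoint reduction via the splitting of the path at an interior absorbing vertex (the paper just asserts it WLOG), while the paper states its bound for the slightly smaller quantity $\tilde\tau=\max(1,\{p_k/q_k\})$, which your argument would also give with no change.
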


\begin{proofP}
Without loss of generality we can assume that absorbing vertex $a$ coincides with $a_0$ -- that is, with the rightmost end of the path tree.

Then we will claim a slightly better result. Namely we will prove the inequality not for the walk's asymmetry $\tau$ but for 
$$
\tilde{\tau} = \max(1, \{\frac{p_k}{q_k}: 0 < k < m\})
$$
Obviously, $\tilde{\tau} \lte \tau$, and with function $\mcf$ being monotonic this will prove the desired result.

\vvv
\begin{figure}[H]
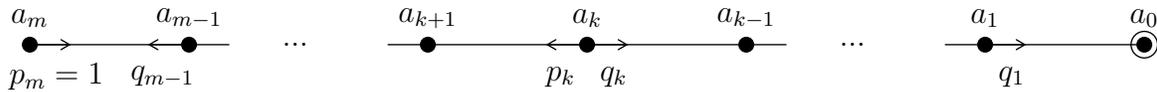

\begin{center}
\begin{asy}
// VERTICES
real x0 = 0;    real y0 = -20;  fill(circle((x0,y0),3));  // m
real x1 = 60;   real y1 = -20;  fill(circle((x1,y1),3));  // m-1
real x2 = 150;  real y2 = -20;  fill(circle((x2,y2),3));  // k+1
real x3 = 210;  real y3 = -20;  fill(circle((x3,y3),3));  // k
real x4 = 270;  real y4 = -20;  fill(circle((x4,y4),3));  // k-1
real x5 = 360;  real y5 = -20;  fill(circle((x5,y5),3));  // 1
real x6 = 420;  real y6 = -20;  fill(circle((x6,y6),3));  // 0
draw(circle((x6,y6),5));

// EDGES
draw((x0, y0) -- (x1, y1));    // m - m-1
draw((x1, y1) -- (x1+15, y1)); // m-1 ->...
draw((x2-15, y2) -- (x2, y2)); // ... -> k+1
draw((x2, y2) -- (x3, y3));    // k+1 - k
draw((x3, y3) -- (x4, y4));    // k - k-1
draw((x4, y4) -- (x4+15, y4)); // k-1 ->...
draw((x5-15, y5) -- (x5, y5)); // ... -> 1
draw((x5, y5) -- (x6, y6));    // 1 - 0

// LABELS
label("$a_m$",     (x0, y0+10)); // m
label("$a_{m-1}$", (x1, y1+10)); // m-1
label("...",       (x1+40, y1)); // m
label("$a_{k+1}$", (x2, y2+10)); // k+1
label("$a_k$",     (x3, y3+10)); // k
label("$a_{k-1}$", (x4, y4+10)); // k-1
label("...",       (x4+40, y4)); // m
label("$a_1$",     (x5, y5+10)); // 1
label("$a_0$",     (x6, y6+10)); // 0

// ARROWS
draw((x0, y0) -- (x0+15, y0),arrow=ArcArrow(SimpleHead));   // m -> m-1
label("$p_m=1$",     (x0+10, y0-12));
draw((x1, y1) -- (x1-15, y1),arrow=ArcArrow(SimpleHead));   // m-1 -> m
label("$q_{m-1}$",   (x1-10, y1-12));
draw((x3, y3) -- (x3-15, y3),arrow=ArcArrow(SimpleHead));   // k -> k+1
label("$p_k$",   (x3-10, y3-12));
draw((x3, y3) -- (x3+15, y3),arrow=ArcArrow(SimpleHead));   // k -> k-1
label("$q_k$",   (x3+10, y3-12));
draw((x5, y5) -- (x5+15, y5),arrow=ArcArrow(SimpleHead));   // 1 -> 0
label("$q_1$",   (x5+10, y5-12));

\end{asy}
\caption{Asymmetric one-dimensional finite random walk}
\end{center}
\end{figure}
\vvv

If we denote $h_k = \mch(a_k)$, and $d_k = h_{k+1}-h_k$, $t_k = p_k/q_k$ then we have
\begin{align}
h_k &= 1 + p_kh_{k+1}+q_kh_{k-1},\quad 0 < k < m \nncr
d_{k-1} &= \frac{1+p_kd_k}{q_k} = (1+t_k)+t_kd_k \lte (1+\tilde{\tau})+\tilde{\tau}d_k \nn 
\end{align}

Using this inequality recursively with $d_{m-1} = 1$, we come to our next inequality
$$
d_{m-k} \lte 2(\tilde{\tau}^{k-1}+\tilde{\tau}^{k-2}+\ldots+\tilde{\tau})+1 = \mcp(\tilde{\tau}, k),\quad 0 < k < m
$$ 
and then 
$$
\mch(P_m) = h_m = d_0+d_1+\ldots+d_{m-1} \lte \sum^m_{k=1}\mcp(\tilde{\tau}, k) = \mcf(\tilde{\tau}, m)
$$
\end{proofP}

Now we are ready to state and prove the case of an arbitrary tree.

\begin{mthm}
\label{thm:ast1}
For any finite tree $T$ with $m$ edges and any random walk on $T$ with asymmetry $\tau$ inequality $\mch(T) \lte \mcf(\tau, m)$ holds true.
\end{mthm}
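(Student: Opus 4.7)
The plan is to follow the structure of the proof of Theorem~\ref{thm:st1}: declare $y$ absorbing, rename it $a$, convert the walk to an $\mca$-walk, and telescope $\mch(x)$ along the unique path $a=a_0,a_1,\ldots,a_d=x$ in $T$. The new ingredient needed is an asymmetric analog of Lemma~\ref{lem:tail_eq}, which I state as follows.

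\emph{Claim.} For every vertex $x\neq a$ and every neighbor $y\in\mcl(x)$,
$$
\mch(y)-\mch(x)\ \lte\ \mcp(\tau,\ell(y)+1).
$$
I would prove this by induction on $\ell(y)$. The base case $\ell(y)=0$ is immediate: $y$ is a pendant vertex, so $p_{yx}=1$ and $\mch(y)-\mch(x)=1=\mcp(\tau,1)$. For the inductive step let $z_1,\ldots,z_k$ be the other neighbors of $y$ in $T$ (so $\ell(z_i)<\ell(y)$ for each $i$). Writing equation~(\ref{eq:maineq_graph}) at $y$ and using $p_{yx}+\sum_i p_{yz_i}=1$ yields
$$
p_{yx}\bigl(\mch(y)-\mch(x)\bigr)\ =\ 1+\sum_{i=1}^k p_{yz_i}\bigl(\mch(z_i)-\mch(y)\bigr),
$$
and the tree structure guarantees each difference $\mch(z_i)-\mch(y)$ is nonnegative (any walk from $z_i$ to $a$ must first pass through $y$). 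Applying the asymmetry bounds $p_{yz_i}/p_{yx}\lte\tau$ (which also force $1/p_{yx}\lte 1+k\tau$) together with the inductive hypothesis gives
$$
\mch(y)-\mch(x)\ \lte\ 1+k\tau+\tau\sum_i \mcp(\tau,\ell(z_i)+1).
$$
Since $\ell(y)=k+\sum_i\ell(z_i)$, we have $\ell(y)+1=1+\sum_i(\ell(z_i)+1)$, so property~(\ref{itm:fp_pnext}) followed by iterated use of property~(\ref{itm:fp_sum}) yields
$$
\mcp(\tau,\ell(y)+1)\ \gte\ \tau\Bigl(\sum_i \mcp(\tau,\ell(z_i)+1)+(k-1)\Bigr)+\tau+1\ =\ 1+k\tau+\tau\sum_i \mcp(\tau,\ell(z_i)+1),
$$
which matches the upper bound exactly and closes the induction.

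With the claim established, telescoping along the $a$-to-$x$ path produces
$$
\mch(x)\ =\ \sum_{k=1}^d\bigl(\mch(a_k)-\mch(a_{k-1})\bigr)\ \lte\ \sum_{k=1}^d \mcp(\tau,\ell(a_k)+1).
$$
Each step toward $a$ removes at least one edge from the tail, so $\ell(a_k)+1\lte m-k+1$; monotonicity of $\mcp$ (property~(\ref{itm:fp_mono})) combined with property~(\ref{itm:fp_fsump}) then gives
$$
\mch(x)\ \lte\ \sum_{k=1}^d \mcp(\tau,m-k+1)\ \lte\ \sum_{j=1}^m \mcp(\tau,j)\ =\ \mcf(\tau,m).
$$

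The main obstacle is the inductive step. The two independent $\tau$-factors on the left---the $1+k\tau$ that appears when inverting $p_{yx}$, and the $\tau$ that multiplies each inductive contribution through $p_{yz_i}/p_{yx}$---must be absorbed \emph{exactly} by the $\tau$ in property~(\ref{itm:fp_pnext}) together with the $(k-1)$ additive slack in iterated property~(\ref{itm:fp_sum}). This cancellation is what singles out $\mcp(\tau,\cdot)$ as the correct quantity to iterate, and is ultimately why the path-graph estimate $\mcf(\tau,m)$ survives for arbitrary trees with no loss.
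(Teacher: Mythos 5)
Your proposal is correct and follows essentially the same route as the paper: the same tail-difference lemma $\mch(y)-\mch(x)\lte\mcp(\tau,\ell(y)+1)$ proved by induction on $\ell(y)$ using properties (\ref{itm:fp_sum})--(\ref{itm:fp_pnext}) of $\mcp$, followed by the same telescoping along the unique $a$-to-$x$ path and the bound $\sum_{k=1}^d\mcp(\tau,\ell(a_k)+1)\lte\sum_{j=1}^m\mcp(\tau,j)=\mcf(\tau,m)$. You even make explicit two small points the paper leaves implicit (the nonnegativity of $\mch(z_i)-\mch(y)$ in a tree and the bound $1/p_{yx}\lte 1+k\tau$), so no changes are needed.
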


\begin{proof}

Let us fix some vertex $a$. Then to prove that for any vertex $x$ we have $\mch(x) = \mch(x,a) \lte \mcf(\tau, m)$ we start with performing our usual conversion to an $\mca$-walk with absorbing vertex $a$.

Then, just as it was with Theorem \ref{thm:st1} the main step in this proof is represented by the following lemma which is a generalization of Lemma \ref{lem:tail_eq}.

\begin{mlemma}
\label{lem:ast1_1}
In the conditions of Lemma \ref{lem:tail_eq} the following inequality is true
\begin{align}
\label{eq:tail_ineq}
\mch(y) - \mch(x) &\lte \mcp(\tau, \ell+1)
\end{align}
where $\ell = \ell(y)$ is the number of edges in the tail $\mcl(y)$ of vertex $y$.
\end{mlemma}

\begin{proof}
The proof is quite similar to the one of Lemma \ref{lem:tail_eq}. Again we will do it by induction by $\ell(y)$. Basis $\ell(y) = 0$ is obvious. 

\vvv
\begin{figure}[H]
\begin{center}
\begin{asy}
fill(circle((-75,-20),1));
fill(circle((-70,-20),1));
fill(circle((-65,-20),1));
fill(circle((-75,-40),1));
fill(circle((-70,-40),1));
fill(circle((-65,-40),1));
fill(circle((-75,-60),1));
fill(circle((-70,-60),1));
fill(circle((-65,-60),1));
fill(circle((-75,-80),1));
fill(circle((-70,-80),1));
fill(circle((-65,-80),1));

fill(circle((-25,-20),3));
fill(circle((-25,-40),3));
fill(circle((-25,-60),3));
fill(circle((-25,-80),3));

draw((-25, -20) -- (15, -40));
label("$z_1$", (-45,-20));
draw((-25, -40) -- (15, -40));
label("$z_2$", (-45,-40));
draw((-25, -60) -- (15, -40));

draw((-25, -80) -- (15, -40));
label("$z_k$", (-45,-80));

draw((15, -40) -- (-5, -30), arrow=ArcArrow(SimpleHead));
label("$p_1$", (3,-23));
draw((15, -40) -- (-5, -60), arrow=ArcArrow(SimpleHead));
label("$p_k$", (3,-67));

fill(circle((15,-40),3));
draw((15, -40) -- (55, -40));
draw((15, -40) -- (35, -40), arrow=ArcArrow(SimpleHead));
label("$p$", (32,-48));

fill(circle((55,-40),3));
draw((55, -40) -- (70, -40));

fill(circle((80,-40),1));
fill(circle((90,-40),1));
fill(circle((100,-40),1));
label("$y$", (15,-58));
label("$x$", (55,-58));

draw((55, -40) -- (75, -10));
fill(circle((75,-10),3));
draw((55, -40) -- (50, -10));
fill(circle((50,-10),3));

draw((125, -40) -- (165, -40));
fill(circle((165,-40),3));
draw((165, -40) -- (205, -40));
fill(circle((205,-40),3));
draw(circle((205,-40),5));

label("$a_0 = a$", (205,-58));
label("$a_1$", (165,-58));
\end{asy}
\end{center}
\end{figure}
\vvv

Let us denote transition probabilities $p_{yz_i}$ as $p_i$, and $p_{yx}$ as $p$. Then  (\ref{eq:maineq_graph}) can be rewritten in the following manner
\begin{align}
\mch_y &= \sum^k_{i=1} p_i\mch_{z_i} + p\mch(x) + 1 \nncr
(p + \sum^k_{i=1} p_i)\mch_y &= \sum^k_{i=1} p_i\mch_{z_i} + p\mch(x) + 1 \nncr
p(\mch(y)-\mch(x)) &= \sum^k_{i=1} p_i(\mch_{z_i}-\mch(y)) + 1 \nncr
\mch(y)-\mch(x) &= \sum^k_{i=1}\frac{p_i}p(\mch_{z_i}-\mch(y)) + \frac1p. \nn
\end{align}
Thus
$$
\mch(y)-\mch(x) \lte \tau\sum^k_{i=1}(\mch_{z_i}-\mch(y)) + k\tau + 1 \nn
$$
and using induction hypothesis, properties (\ref{itm:fp_sum}-\ref{itm:fp_pnext}) of functions $\mcp$ and $\mcf$, inequality $\tau\gte1$ and the fact that sum of $\ell(z_i)$ equals $\ell-k$ we have 
\begin{align}
\mch(y)-\mch(x) &\lte \tau\sum^k_{i=1}\mcp(\tau, \ell(z_i)+1) + k\tau + 1 \nncr
                &\lte \tau\mcp(\tau, \ell(z_1)+\ldots+\ell(z_k)+k) + \tau + 1 \vphantom{\sum^k_{i=1}} \nncr
                & = \tau\mcp(\tau, \ell)+\tau+1 = \mcp(\tau, \ell+1) \vphantom{\sum^k_{i=1}}.\nn
\end{align}
\end{proof}

Now we can finalize the proof of Theorem \ref{thm:ast1}. Let us connect any vertex $x$ in $T$ with absorbing vertex $a$ by non-self-intersecting path of length $d$ indexing its vertices as $a_0 = a$, $a_1$, \ldots, $a_d = x$. Denoting $\ell(a_k)$ as simply $\ell_k$, writing out inequalities (\ref{eq:tail_ineq}) and adding them up we have
\begin{align}
\mch(a_k)-\mch(a_{k-1}) &\lte \mcp(\tau, \ell_k+1)\ ,\quad k = 1\ldots d \nncr
\mch(x) = \mch(a_d) = \sum_{k=1}^d \mcp(\tau, \ell_k+1) &\lte \sum_{k=1}^m \mcp(\tau, k) = \mcf(\tau, m) \nn
\end{align}
since numbers $\ell_k+1$ are a monotonically decreasing sequence of $d \lte m$ different positive integers with the largest of them no greater than $m$.
\end{proof}

Similarly to Proposition \ref{prop:pt_ineq} we have actually proved a slightly stronger fact. In a tree with one vertex $a$ marked (in our case, the absorbing vertex) for any edge $e=\edge{u,v}$ we can define direction "towards" $a$ on it (or alternatively, "away" from $a$). Graph $G\backslash e$ has exactly two components. Namely, direction on $e$ "towards" $a$ is direction towards that vertex out of $u$ and $v$ which lies in the same component as $a$. Any vertex in the tree (except $a$) has exactly one incident edge directed \textit{towards} $a$, all the others are pointing \textit{away} from it.

If we define $\tilde{\tau}$ as maximum of 1 and all ratios $p/q$ between two probabilities for transition from same vertex where $p$ is probability of an "away" transition, and $q$ -- of "towards" transition, then it is easy to see that we have actually proved our inequality for $\tilde{\tau}$ instead of $\tau$. As a nice corollary we obtain that if for any vertex its transition probability "towards" $a$ is greater than or equal to any transition probability "away" from $a$ then $\tilde{\tau} = 1$ and thus we will have $\mch(G) \lte m^2$.

Obviously, this corollary cannot be directly generalized for an arbitrary connected graph as notion of "direction" cannot be similarly defined in a graph with cycles.

We can see that polynomial $\mcf$ gives us exact value of $\mch(G)$ only in case of path graph with constant transition probabilities $(p,q)$ where $p$ is probability of transition that moves the walker away from the absorbing vertex (which is fixed as one of the ends of the path), and $p\gte q = 1-p$. Also it is easy to show that $m^2\tau^{m-1}$ can be used as a much simpler but less precise upper bound for $\mch$.

\xxx

Let us move on to the general case of random walks (possibly asymmetric) on finite connected graphs. We will venture a guess that results similar to Theorems \ref{thm:fg1} and \ref{thm:ast1} are true for any finite connected graph and any random walk defined on it.

\begin{mthm}
\label{thm:asg1}
For any random walk on $G$ with asymmetry $\tau$ inequality $\mch(G) \lte \mcf(\tau, m)$ holds true.
\end{mthm}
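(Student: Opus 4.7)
The strategy I would adopt follows the two-stage pattern already established in the paper: first bound hitting times on trees via Theorem \ref{thm:ast1}, then extend to general graphs by reducing to a spanning tree. I would begin by performing the standard conversion to an $\mca$-walk with a designated absorbing vertex $a$. Since the walk comes from an edge-weight function $w$, the induced chain is reversible and corresponds to an electric network with conductances $c_e = w(e)$, so the electrical-network toolkit (Ruzzo's theorem and Rayleigh's Monotonicity Law, both flagged by the author) is fully available.

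The decisive step is to show that $G$ may be replaced by a spanning tree without decreasing the hitting time $\mch_G(x, a)$. More precisely, I would look for a sequence of edge deletions --- each keeping the graph connected and keeping $x$ in the same component as $a$ --- under which $\mch(x, a)$ is monotonically non-decreasing. This is the natural asymmetric analog of Lemma \ref{lem:sg1_1}, and my intended route is through the electric-network correspondence: Rayleigh's Law guarantees monotonicity of effective resistance under edge removal, and in the reversible setting this transfers (via Ruzzo-type formulas expressing hitting time in terms of currents and resistances) to a monotonicity statement for $\mch(x, a)$ itself.

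Once the graph has been reduced to a spanning tree $T$, the rest is bookkeeping. The restriction $w\big|_T$ is an edge-weight function on $T$; its asymmetry $\tau_T$ satisfies $\tau_T \lte \tau$, since removing edges only restricts the neighbor-pairs maximized over; and $T$ has $n - 1 \lte m$ edges. Applying Theorem \ref{thm:ast1} and invoking the monotonicity of $\mcf$ in both arguments (property (\ref{itm:fp_mono})) then gives
\[
\mch_T(x, a) \lte \mcf(\tau_T,\, n-1) \lte \mcf(\tau, m),
\]
which is the desired inequality.

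The main obstacle, as the author himself signals via the counterexample in Fig \ref{fig:counterex_fc}, lies in the edge-deletion step. The naive claim that "removing an edge cannot decrease the hitting time" already fails for arbitrary edge-cost functions, and similar concerns arise in the asymmetric setting here. What makes me optimistic is precisely the reversibility afforded by the edge-weight function: reversibility binds hitting times tightly to effective resistances, and the latter are genuinely monotone under edge cutting. Pinning down exactly which edges can safely be deleted --- and verifying the resulting hitting-time inequality rigorously in the asymmetric reversible setting --- is where the bulk of the technical work will sit.
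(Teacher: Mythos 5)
Your reduction-to-a-spanning-tree step is not just unproven, it is false, and the tell is that it would prove a stronger bound than the theorem states. If you could delete edges down to a spanning tree $T$ without ever decreasing $\mch(x,a)$, you would conclude $\mch_G(x,a) \lte \mch_T(x,a) \lte \mcf(\tau, n-1)$, i.e.\ a bound in the number of \emph{vertices}. For the simple random walk ($\tau = 1$) this reads $\mch(G) \lte (n-1)^2$, which fails badly: a lollipop graph (a clique on roughly $2n/3$ vertices with a path of length roughly $n/3$ attached) has maximum hitting time of order $n^3$, while \emph{every} tree on $n$ vertices has all hitting times at most $(n-1)^2$ by Theorem \ref{thm:st1}. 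So for that graph no spanning tree whatsoever dominates its hitting time, and hence no sequence of deletions can be monotonically non-decreasing; the theorem's bound $\mcf(\tau,m)$ is only saved because $m$, not $n-1$, appears. The electrical intuition does not rescue the step either: Rayleigh's law makes $Res_{xa}$ monotone under cutting, but the Ruzzo formula gives $\mcc(x,a) = F\cdot Res_{xa}$ where $F$ is the \emph{total} conductance, which drops when an edge is cut, so even the commute time is not monotone under deletion, and the one-way hitting time additionally involves resistances to all vertices (weighted by their total conductances), not $Res_{xa}$ alone. This is exactly why Lemma \ref{lem:sg1_1} and its asymmetric analog are restricted to edges incident to the absorbing vertex.

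The paper's proof avoids a global tree reduction entirely. It deletes only edges at the absorbing vertex $a$ (Lemma \ref{lem:asg1_1}, where monotonicity does hold, by the same linear-system argument as Lemma \ref{lem:sg1_1}), so that $a$ becomes pendant with unique neighbor $a_1$; it then bounds $\mch(a_1,a) \lte \mcp(\tau, m)$ (Lemma \ref{lem:asg1_2}) via Ruzzo's formula with $Res_{aa_1}=1$ and a $\tau$-power bound on the total conductance; finally it inducts on the number of edges, writing $\mch(x,a) = \mch_{G^*}(x,a_1) + \mch(a_1,a) \lte \mcf(\tau,m-1) + \mcp(\tau,m) = \mcf(\tau,m)$, where $G^* = G'\backslash a$. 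If you want to salvage your outline, you must give up the spanning tree and keep the edge count $m$ in the bound, which essentially forces this peel-off-the-absorbing-vertex induction (or the $\mbs$-function argument of Theorem \ref{thm:fsgc}, which strictly generalizes it).
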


\begin{proof}

As before, we choose any vertex $a$ and convert our random walk into an $\mca$-walk. Then, the following lemma is a straightforward generalization of Lemma \ref{lem:sg1_1}.

\begin{mlemma}
\label{lem:asg1_1}
Consider absorbing vertex $a$ and edge $e=\edge{a,x} \in \mce$. We will consider random walk on graph $G' = G\backslash e$ generated from the same edge-weight function restricted to $G'$. Then for any vertex $u\in\mcv$
$$\mch_G(u) \lte \mch_{G'}(u).
$$
(In other words, removing edge $e = \edge{a,x}$ and proportionally redistributing its transition probability between other edges coming out of $x$ cannot decrease hitting time.)
\end{mlemma}

\begin{proof}
Proof is almost exactly the same as for Lemma \ref{lem:sg1_1} and I will skip it. Of course, if $u$ is not in the same component of connectedness of $G'$ as $a$ then we have $\mch_{G'}(u) = \infty$ and there is nothing to prove.
\end{proof}

And we will need something similar to Lemma \ref{lem:ast1_1}. The following lemma is the main hurdle in this proof.

\begin{mlemma}
\label{lem:asg1_2}
If absorbing state $a$ is a pendant vertex connected only with vertex $a_1$ then 
\begin{align}
\mch(a_1) &\lte \mcp(\tau, m) \nn
\end{align}
\end{mlemma}

\begin{proofL}
We revert back to considering original random walk so $a$ is no longer an absorbing vertex. There is only one transition out of $a$ and $p_{aa_1} = 1$.

Let us remind you that commute time $\mcc(u,v)$ between vertices $u$ and $v$ in graph $G$ is the sum of hitting times $\mch(u,v)+\mch(v,u)$. Since $\mch(a, a_1) = 1$ it would suffice to prove that commute time between $a$ and $a_1$ is at most $\mcp(\tau, n)+1$.

To do that we will use electric network approach described earlier in this section. The following result for commute time is proved in \cite{Ruzzo} (Theorem 2.2, case of trivial cost function): for any two vertices of $\mcv$ we have equality
\begin{align}
\label{eq:ctfres}
\mcc(u,v) = \mch(u,v)+\mch(v,u) = F\cdot Res_{uv},
\end{align}
where $Res_{uv}$ is effective resistance between vertices $u$ and $v$, and
$$
F = \sum_{e\in \dir{\mce}}\frac1{r_e}.
$$
where sum is taken over the set $\dir\mce$ of all directed edges (so each undirected edge $\edge{u,v}$ gives us two terms -- for $e=\dir{uv}$ and $e=\dir{vu}$).

From equation (\ref{eq:ctfres}) we have
$$
\mch(a_1) = \mch(a_1,a) = \mcc(a,a_1) - \mch(a,a_1) = F\cdot Res_{aa_1} - 1 = F - 1
$$
since $\mch(a,a_1) = 1$, and $Res_{aa_1} = 1$ because these two vertices are connected by one edge of resistance 1 with no other edges coming out of $a$.

Now all we need is to prove that
$$
\sum_{e\in \dir{\mce}}c_e = \sum_{e\in \dir{\mce}}\frac1{r_e} = F \lte \mcp(\tau, m)+1 = 2\left(\tau^{m-1}+\tau^{m-2}+\ldots+1\right)
$$
or, switching from directed edges to undirected 
$$
\sum_{e\in \mce}c_e = \sum_{e\in \mce}\frac1{r_e} \lte \tau^{m-1}+\tau^{m-2}+\ldots+1.
$$

Consider any two incident edges $\edge{u,v}$ (or $\edge{v,u}$) and $\edge{v,w}$. Ratio of their weights (conductances) is the same as ratio $p_{vu}/p_{vw}$ which is bounded from above by $\tau$. Thus if $d = \dist_a(uv)$ then conductance of $\edge{u,v}$ cannot be greater than $\tau^d$, which together with monotonicity of function $y=\tau^x$ proves the required inequality. This also proves that it turns into equality only for a path graph.
\end{proofL}

Once again we will make use of induction by the number of edges. Basis case $m=0$ is obvious. Now we do more or less the same as in Theorem \ref{thm:sg1}. Connect vertex $x$ with absorbing vertex $a$ by any path $\Pi$ and let $a_1$ be next to last vertex of $\Pi$. Remove all edges out of vertex $a$ except for $\edge{a,a_1}$ one by one and adjust the walk accordingly as shown in Lemma \ref{lem:asg1_1}. The lemma guarantees us that for this new graph $G'$ we have $\mch_G(x,a) \lte \mch_{G'}(x,a)$. Thus proving our inequality for $G'$ will prove it for the original graph $G$, because both number of edges and $\tau$ have not increased when we switched from $G$ to $G'$.

\vvv
\begin{figure}[H]
\begin{center}
\begin{asy}
// ELLIPSIS
	real xp = 80; real yp = -20;
fill(circle((xp,yp),1));
fill(circle((xp+10,-20),1));
fill(circle((xp+20,-20),1));

// VERTICES
	real x0 = xp+45; real y0 = yp;
	real x1 = xp+60; real y1 = yp+25;
	real x2 = xp+20; real y2 = yp-15;
	real x3 = xp+85; real y3 = yp;
	real x4 = xp+125; real y4 = yp;
	real x5 = xp-20; real y5 = yp;
	real x6 = xp+30; real y6 = yp-30;
fill(circle((x0, y0),3));
fill(circle((x1, y1),3));
fill(circle((x2, y2),3));
fill(circle((x3, y3),3));
fill(circle((x4, y4),3));
fill(circle((x5, y5),3));
fill(circle((x6, y6),3));
draw(circle((x4, y4),5));

// ELLIPSE
draw(ellipse((xp+20, yp), 80, 60));
label("\Large{$G^*$}", (xp-10, yp+40));

// EDGES
draw((x0-15, y0) -- (x0, y0));
// draw((x1, y1) -- (x3, y3));
draw((x2, y2) -- (x3, y3));
draw((x0, y0) -- (x3, y3));
draw((x3, y3) -- (x4, y4));
draw((x5, y5) -- (x5+15, y5));
draw((x2, y2) -- (x6, y6));
draw((x3, y3) -- (x6, y6));
draw((x4, y4) -- (x1, y1), dotted);
draw((x4, y4) -- (x6, y6), dotted);
draw((x6, y6) -- (x6-5, y6-13));
draw((x2, y2) -- (x2-15, y2-3));
draw((x1, y1) -- (x1-12, y1+5));
draw((x1, y1) -- (x1-15, y1-5));
draw((x1, y1) -- (x1+5, y1+15));

// LABELS
label("$a_1$", (x3-5, y3-15));
label("$a$", (x4, y4-15));
label("$x$", (x5, y5-15));

\end{asy}
\end{center}
\end{figure}
\vvv

Let us denote $G'\backslash a$ by $G^*$ and set $a_1$ as absorbing vertex. Then as we already know, solution of system (\ref{eq:maineq_graph}) for $G^*$ is the same as solution for $G'$ from which $\mch_{G'}(a_1,a)$ is subtracted. 

Since by induction hypothesis we have $\mch_{G^*}(x,a_1) \lte \mcf(\tau,m-1)$ then from Lemma \ref{lem:asg1_2} we have that
\begin{align}
\mch_{G'}(x,a) = \mch_{G^*}(x,a_1) &+ \mch_{G'}(a_1,a) \nncr
   &\lte \mcf(\tau,m-1) + \mcp(\tau,m) = \mcf(\tau,m). \nn
\end{align}
\end{proof}



\msection{Proof of the main theorem}

Finally, here is our strongest result that generalizes almost all of the previous ones. In a way it would have been simpler to simply state it in the very beginning, prove it and be done. However, as it was already mentioned before, I do not favor such an approach.

\begin{mthm}
\label{thm:fsgc}
For any random walk on $G=(\mcv,\mce)$ with asymmetry $\tau$ and any non-negative edge-cost function $f$ inequality
$$
\mch^f_G(x,a) \lte \sum_{e\in \mce} \mcp(\tau, \dist_a(e)+1)\cdot f(e)
$$
holds true for any vertices $a, x\in \mcv$.
\end{mthm}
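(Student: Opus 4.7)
The strategy combines the linear-operator reduction of Theorem~\ref{thm:fg1} with the edge-deletion induction of Theorem~\ref{thm:asg1}. After converting to an $\mca$-walk with absorbing vertex $a$, linearity of expectation gives
\[
\mch^f_G(x,a) \;=\; \sum_{e\in\mce}\mbs_e(x)\,f(e),
\]
where $\mbs_e(x)$ is the expected number of traversals of $e$ by the walk from $x$ before absorption. Since $f\gte 0$ and both sides of the target inequality are linear in $f$, it suffices to prove, for every fixed $e^*\in\mce$ and every $x\in\mcv$, the per-edge bound $\mbs_{e^*}(x)\lte\mcp(\tau,\dist_a(e^*)+1)$.

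I would prove this bound by induction on the number of edges $m$, in the manner of Theorem~\ref{thm:asg1}. The base case $\dist_a(e^*)=0$ is immediate: once the walk traverses $e^*$ toward $a$ it is absorbed, so $\mbs_{e^*}(x)\lte 1=\mcp(\tau,1)$. For the inductive step, reduce to the case of pendant $a$ by deleting edges $[a,z]\neq e^*$ one at a time. Such deletion cannot decrease $\mbs_{e^*}(x)$: this is the $\mbs_{e^*}$-analogue of Lemma~\ref{lem:asg1_1}. The counterexample of Figure~\ref{fig:counterex_fc} to the generic-$f$ version of this monotonicity does not apply here because the deleted edge carries no cost under $f=\delta_{e^*}$; the representation-theoretic argument of Lemma~\ref{lem:sg1_1}, expressing the difference as a positive multiple of the vector of hitting probabilities through the deleted edge, transfers unchanged.

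After the reduction, $a$ is pendant with unique neighbor $a_1$ lying on a shortest $a$-to-$e^*$ path, so $\dist^{G^*}_{a_1}(e^*)=d-1$ where $d=\dist^G_a(e^*)$ and $G^*:=G\setminus\{a\}$. The induction hypothesis in $G^*$ bounds $\mbs^{G^*}_{e^*}(y)\lte\mcp(\tau,d)$ for every $y$. Decomposing the $G$-walk via the strong Markov property at visits to $a_1$ gives
\[
\mbs^G_{e^*}(x) \;=\; \mbs^{G^*}_{e^*}(x) + \frac{1}{q}\sum_{z\in\mcn_{G^*}(a_1)} p_{a_1,z}\bigl(\mathbf{1}\{e^*{=}[a_1,z]\} + \mbs^{G^*}_{e^*}(z)\bigr),
\]
where $q=p_{a_1,a}$, and the task reduces to matching this expression to $\mcp(\tau,d+1)=\tau\mcp(\tau,d)+\tau+1$ via property~(\ref{itm:fp_pnext}).

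The main obstacle is controlling the escape-probability factor $1/q$ together with the excursion sum: the naive local bound $1/q\lte 1+\deg_{G^*}(a_1)\cdot\tau$ is too weak (it fails already when $d\gte 2$ and $\deg_{G^*}(a_1)\gte 2$), so one must pass to global electric-network information as in Lemma~\ref{lem:asg1_2}. Specifically, I would apply the commute-time identity $\mcc(a,a_1)=F\cdot\operatorname{Res}(a,a_1)$ together with the distance-based conductance bound $c_e\lte\tau^{\dist_a(e)}$ from that lemma to extract precisely the $\tau$-factor needed to close the recurrence. This pendant-vertex electric-network computation is where the proof meets most of its genuine difficulty.
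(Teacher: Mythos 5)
Your reduction to the per-edge bound $\mbs_{e^*}(x)\lte\mcp(\tau,\dist_a(e^*)+1)$ is exactly the paper's first step, and your structural reduction to a pendant absorbing vertex $a$ with neighbor $a_1$ on a shortest $a$-to-$e^*$ path is sound (including the observation that $\dist^{G^*}_{a_1}(e^*)=d-1$). But the step you yourself flag as the crux is where the argument genuinely fails, in two ways. First, the recurrence as you set it up cannot close even with perfect control of $q=p_{a_1a}$: applying the induction hypothesis $\mbs^{G^*}_{e^*}(z)\lte\mcp(\tau,d)$ to every excursion from $a_1$ and multiplying by the expected number of excursions gives a term of order $\frac{1-q}{q}\,\mcp(\tau,d)$, whereas the available slack is only $\mcp(\tau,d+1)-\mcp(\tau,d)=2\tau^{d}$; already for the simple walk with $\deg(a_1)$ large this is off by a factor of about $\deg(a_1)$, because most excursions never reach $e^*$ and the per-excursion IH cannot see that. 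Second, the tool you propose to repair this, the commute-time identity of Lemma~\ref{lem:asg1_2}, is an aggregate statement: it bounds $\mch(a_1,a)=\sum_{e}\mbs_e(a_1)$ by $\mcp(\tau,m)$, and no distance-based conductance bound applied to it recovers the per-edge estimate $\mbs_{e^*}(a_1)\lte 2\tau^{d}$ that your induction actually needs (nor does it improve the bound on $1/q$ beyond the local estimate you already rejected).

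What closes the gap is a pointwise conservation identity at the pendant vertex, not the commute-time formula: writing $w(u,v)\bigl(\mbs_y(u)-\mbs_y(v)\bigr)$ on every directed edge and summing with grouping by initial vertex gives, for every vertex $y\neq a$, the exact value $\mbs_y(a_1)=W(y)/w(a,a_1)$, hence $\mbs_{e^*}(a_1)=p_{yz}\mbs_y(a_1)+p_{zy}\mbs_z(a_1)=2\,w(e^*)/w(a,a_1)\lte 2\tau^{d}$, since $e^*$ can be linked to $\edge{a,a_1}$ by a chain of $d$ consecutively incident edges. Plugging this into your decomposition yields $\mbs^G_{e^*}(x)\lte\mcp(\tau,d)+2\tau^{d}=\mcp(\tau,d+1)$ and your induction on $m$ then goes through. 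This is essentially the computation the paper performs inside Lemma~\ref{lem:asb_ineq}, though packaged differently: the paper proves a vertex-level bound $\mbs_x\lte\frac1q(1+\tau+\dots+\tau^{d-1})$ by induction on $d$, keeping the escape probability $q$ explicit, and only then passes to edges via $\mbs_e=p_{yz}\mbs_y+p_{zy}\mbs_z$, where the factors $p_{yz},p_{zy}$ cancel the $1/q$'s up to a factor $\tau$; no electric-network argument is used in this theorem at all. (Alternatively, the cost-function version of the commute formula from \cite{Ruzzo} applied with $f=\delta_{e^*}$ would also give $\mbs_{e^*}(a_1)=2c_{e^*}/c_{aa_1}$, but the trivial-cost version you cite does not.)
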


\begin{proof}
Using $\mbs$-functions (see proof of Theorem \ref{thm:fg1}) we can reformulate this theorem's statement as the following inequality: $\mbs_e(x)\lte \mcp(\tau, \dist_a(e)+1)$. Now all we need is an inequality similar to Lemma \ref{lem:sb_ineq}.

\begin{mlemma}
\label{lem:asb_ineq} For any vertex $x \neq a$ function $\mbs_x$ satisfies the following inequality
\begin{align}
\label{eq:sb}
\mbs_x \lte \frac1q\cdot(1+\tau+\ldots+\tau^{d-1})
\end{align}
where $d = \dist_a(x)$ and $q = p_{xy}$ is transition probability along any edge $\edge{x,y}$ such that $\dist_a(y) < d$.

\end{mlemma}

\begin{proofL}

Here is the plan: there exists path $\Pi = [xy\ldots a_1a]$ of length $d$ connecting vertices $x$ and $a$ which begins with edge $\edge{x,y}$. Following exactly same reasoning as in Lemma \ref{lem:sg1_1}, we can claim that removal of all edges coming out of $a$ except for $\edge{a,a_1}$ does not decrease values of function $\mbs_x$, and does not increase the value of $\tau$. Therefore, we can assume that $a$ is a pendant vertex and thus $\mbs_x(v,a) = \mbs_x(a_1,a) + \mbs^*_x(v,a_1)$ where $\mbs^*_x = \mbs_{x,G^*}$ is $\mbs$-function for graph $G^* = G\backslash a$. Thus we can replace $G$ with $G^*$, use inequality \ref{eq:sb} for $G^*$ (induction by $d$) and try to get an upper bound for $\mbs_x(a_1) = \mbs_x(a_1,a)$. Let's go ahead and execute this plan.

\vvv
\begin{figure}[H]
\begin{center}
\begin{asy}
// MAIN VARIABLES
real xp = 0; real yp = 0;

// VERTICES
real bx = xp-65; real by = yp+5;
real cx = xp-50; real cy = yp-15;
real dx = xp-25; real dy = yp-25;
real ex = xp;    real ey = yp+5;
real fx = xp+20; real fy = yp-40;
	
real a2x = xp+55; real a2y = yp+10;
real a1x = xp+85; real a1y = yp;
real ax = xp+125; real ay = yp;

fill(circle((bx, by),3));
fill(circle((cx, cy),3));
fill(circle((dx, dy),3));
fill(circle((ex, ey),3));
fill(circle((fx, fy),3));

fill(circle((a2x, a2y),3));
fill(circle((a1x, a1y),3));
fill(circle((ax, ay),3));
draw(circle((ax, ay),5));

// ELLIPSE
draw(ellipse((xp-10, yp), 110, 50));
label("{\Large{$G^*$}}", (xp-5, yp+35));

// "RANDOM" EDGES
draw((bx, by) -- (bx+7, by+10));
draw((bx, by) -- (bx-3, by+12));
draw((bx, by) -- (bx-17, by+2));
draw((bx, by) -- (bx-25, by-12));

draw((a1x, a1y) -- (a1x-2, a1y+13));
draw((a1x, a1y) -- (a1x-20, a1y-3));

// PATH EDGES
draw((bx, by) -- (cx, cy));
draw((cx, cy) -- (dx, dy));
draw((dx, dy) -- (ex, ey));
draw((ex, ey) -- (fx, fy));
draw((fx, fy) -- (a2x, a2y));

draw((a2x, a2y) -- (a1x, a1y));
draw((a1x, a1y) -- (ax, ay));

// LABELS
real lby = 12;
label("$x$", (bx, by-lby));
label("$y$", (cx, cy-lby));

label("$a_1$", (a1x-5, a1y-lby));
label("$a$", (ax, ay-lby));

\end{asy}
\end{center}
\end{figure}
\vvv

First, we will use induction by $d$. If $d = 1$ then $x$ is adjacent to $a$ (thus $y=a_1=a$). Therefore after removing all other edges coming out of $a$ we have 
$$
\mbs_{xa} = p_{xa}\mbs_x + p_{ax}\mbs_a = p_{xa}\mbs_x
$$
Since obviously $\mbs_{xa} = 1$, we have $\mbs_x = 1/p_{xa}$ proving the basis of induction.

Second, to prove induction step from $d-1$ to $d>1$ we need to find some upper bound for $\mbs_x(a_1)$. Let us define $W(x)$ as sum of weights for all edges coming out of vertex $x$. Second, if for every directed edge $\dir{uv}$ of graph $G$ we write product $w(u,v)\cdot(\mbs_x(u)-\mbs_x(v))$ next to that edge then sum of all these numbers is zero. But if we group them by the start vertex then for every vertex $u$ we will have the sum of the numbers in that group
\begin{align}
\sum_{v\in \mcn(u)} w(u,v)&(\mbs_x(u)-\mbs_x(v)) \nncr
  &= \mbs_x(u) W(x) - \sum_{v\in \mcn(u)} w(u,v)\mbs_x(v) \nncr
  & = W(u)\left(\mbs_x(u) -  \sum_{v\in \mcn(u)} p_{uv}\mbs_x(v)\right) \nn
\end{align}
which is zero for every vertex other than $a$ and $x$. Adding up all these grouped expressions we obtain $$
W(x) - w(a,a_1)\mbs_x(a_1) = 0
$$
and
$$
\mbs_x(a_1) = \frac{W(x)}{w(a,a_1)}
$$

Using $q = p_{xy} = \frac{w(x,y)}{W(x)}$ we get
$$
\mbs_{a_1} = \frac{W(x)}{w(a,a_1)} = \frac{w(x,y)}{q\cdot w(a,a_1)} \lte \frac{\tau^{d-1}}q
$$
because distance between edges $\edge{x,y}$ and $\edge{a,a_1}$ is $d-2$ and thus their weights' ratio is at most $\tau^{d-1}$.

Finally for any $v\in\mcv$ we have 
\begin{align}
\mbs_x(v,a) &= \mbs_x(a_1,a) + \mbs^*_x(v,a_1) \vphfpp \nncr
             &\lte \frac{\tau^{d-1}}q + \frac{1+\tau+\ldots+\tau^{d-2}}q
             = \frac{1+\tau+\ldots+\tau^{d-1}}q \nn
\end{align}
proving the induction step as well.
\end{proofL}

Now let's assume that edge $e$ connects vertices $y$ and $z$, and $d = \dist_a(e)$. We have then only two possible cases (there are actually three cases but two of them are symmetric and without loss of generality we can discard one of them).

\textsc{Case 1.} $\dist_a(y) > d$. Thus $\dist_a(z) = d$ and $\dist_a(y) = d+1$. Let $t$ be any neighbor of $z$ such that it is closer than $z$ to vertex $a$.

\vvv
\begin{figure}[H]
\begin{center}
\begin{asy}
// MAIN VARIABLES
real[] px = {-115,-60,-25,20,55,85,125};
real[] py = {5,-15,-20,-10,10,-5,0};
// VERTICES
int[]  idxs  = {0,1,2,3,4,5,6};
for (var ii:idxs) fill(circle((px[ii], py[ii]),3));
// PATH EDGES
int[]  idxs1 = {0,1,2,3,4,5};
for (var jj:idxs1) draw((px[jj],py[jj])--(px[jj+1],py[jj+1]));
// LABELS
real dy = 12, dx = 4;
label("$y$", (px[0], py[0]-dy));
label("$z$", (px[1], py[1]-dy));
label("$t$", (px[2], py[2]-dy));
label("$a$", (px[6], py[6]-dy));
label("$e$", ((px[0]+px[1])/2+dx, (py[0]+py[1])/2+dy/2));
\end{asy}
\end{center}
\end{figure}
\vvv

From Lemma \ref{lem:asb_ineq} we have
\begin{align}
\mbs_y &\lte \frac1{p_{yz}}\cdot(1+\tau+\ldots+\tau^d) \nncr
\mbs_z &\lte \frac1{p_{zt}}\cdot(1+\tau+\ldots+\tau^{d-1}) \nn
\end{align}
and so
\begin{align}
\mbs_e &= p_{yz}\mbs_y + p_{zy}\mbs_z \vphfpp \nncr
       &\lte (1+\tau+\ldots+\tau^d) + \frac{p_{zy}}{p_{zt}}(1+\tau+\ldots+\tau^{d-1}) \nncr
       &\lte (1+\tau+\ldots+\tau^d) + \tau(1+\tau+\ldots+\tau^{d-1}) \vphfpp \nncr
       &\lte \mcp(\tau, d+1) \vphfpp \nn
\end{align}

\textsc{Case 2.} $\dist_a(y) = \dist_a(z) = d$. Let's choose any vertex $u$ adjacent to $y$ and vertex $v$ adjacent to $z$ such that $\dist_a(u) = d-1$ and $\dist_a(v) = d-1$.

\vvv
\begin{figure}[H]
\begin{center}
\begin{asy}
// MAIN VARIABLES
real[] px = {-115,-50,-25,14,55,125};
real[] py = {-5,-15,-13,33,10,5};
real[] qx = {-130,-60,-27,14,75,125};
real[] qy = {45,25,43,33,40,5};
int[]  idxs = {0,1,2,3,4,5};
int[]  idxs1 = {0,1,2,3,4};
// A -- X path
// VERTICES
for (var ii:idxs) fill(circle((px[ii], py[ii]),3));
// PATH EDGES
for (var ii:idxs1) draw((px[ii],py[ii])--(px[ii+1],py[ii+1]));
// A -- Y path
// VERTICES
for (var ii:idxs) fill(circle((qx[ii], qy[ii]),3));
// PATH EDGES
for (var ii:idxs1) draw((qx[ii],qy[ii])--(qx[ii+1],qy[ii+1]));
// ONE MORE EDGE (XY)
draw((px[0], py[0]) -- (qx[0], qy[0]));
// LABELS
real dy = 12, dx = 8;
label("$y$", (px[0], py[0]-dy));
label("$u$", (px[1], py[1]-dy));
label("$z$", (qx[0], qy[0]+dy));
label("$v$", (qx[1], qy[1]+dy));
label("$a$", (px[5], py[5]-dy));
label("$e$", ((px[0]+qx[0])/2+dx, (py[0]+qy[0])/2));
\end{asy}
\end{center}
\end{figure}
\vvv

Again we have
\begin{align}
\mbs_y &\lte \frac1{p_{yu}}\cdot(1+\tau+\ldots+\tau^{d-1}) \nncr
\mbs_z &\lte \frac1{p_{zv}}\cdot(1+\tau+\ldots+\tau^{d-1}) \nn
\end{align}
and
\begin{align}
\mbs_e &= p_{yz}\mbs_y + p_{zy}\mbs_z \vphfpp \nncr
       &\lte \frac{p_{yz}}{p_{yu}}(1+\tau+\ldots+\tau^{d-1}) + \frac{p_{zy}}{p_{zv}}(1+\tau+\ldots+\tau^{d-1}) \nncr
       &\lte 2\tau(1+\tau+\ldots+\tau^{d-1}) < \mcp(\tau, d+1) \vphfpp \nn
\end{align}
which concludes the proof.
\end{proof}


\msection{Acknowledgments and motivation}

A simpler case of this problem (see Theorem \ref{thm:sg1c}) was posed to me as a conjecture sometime around February 20, 2016 by my old friend and colleague Alexey Kirichenko, and author wants to thank him for the opportunity to engage in fruitful discussions about some issues in computational complexity, graph theory and Markov chains, as well as for his enduring friendship and readiness to help out whenever I needed an advice.

Now for scientific motivation. This issue comes from an old and very important complexity theory question about an algorithm with limited memory to determine whether two vertices $A$ and $B$ in any given finite undirected graph $G$ can be connected with a path. Theory that investigates this is called \textit{s-t connectivity} where "s" and "t" come from conventional names ("Source" and "Target") for two vertices of the given graph which in this article we usually called $x$ and $a$. Complexity of s-t connectivity for directed graphs is called STCON, and for undirected graphs -- USTCON.

It is known that STCON is \textbf{NL}-complete, that is, a non-deterministic Turing machine with log-space memory can provide the next step for the algorithm which will eventually build the desired path (if the graph is connected). USTCON was shown to be \textbf{L}-complete (see \cite{Rein}), meaning that it can be solved by a deterministic Turing machine using logarithmic amount of memory.

The connectivity problem becomes much easier if we decide to make do with an \textit{heuristic}; in other words if we attempt to come up with an algorithm that determines "probability" of $a$ and $x$ being connected in $G$ within some preset tolerance $\varepsilon$. For instance, we could be satisfied if after the algorithm is run we can claim we know whether $a$ and $x$ are connected or not with probability greater than 0.999 ($\varepsilon = 0.001$). Now imagine that we know that the expected length of a simple random walk on $G$ with absorbing state vertex $a$ is less than some specific number $N$. Simulating a random walk on graph $G$ requires only finite memory (basically, we only need to store current location of the walker, and some trivial fixed overhead such as the number of steps and ids of our two vertices) and if starting from $x$ we haven't reached vertex $a$ after $2N$ moves, we can stop the simulation and "claim" that probability that $a$ and $x$ are connected is below $1/2$ (by Markov's inequality, see \cite{Feller}). We then repeat this $2N$-steps-long walk $k = \lceil\log_2(1/\varepsilon)\rceil$ times. If $a$ was never reached, then probability drops below $2^{-k} < \varepsilon$ and we can consequently state (with the required level of confidence) that $a$ and $x$ are not connected.

Usually a researcher proves a "big $O$"-type of asymptotic upper bound and stops there, since from the point of view of computational complexity theory the job is done -- many statements of this type can be found in classical work by Aleliunas et al, \cite{Aleliunas}. This approach is fine for pure theoretical purposes, but it is not applicable for the situation that I have just described above because we need to simulate random walk with a specific number of steps. Therefore having an upper bound of, say, $O(n^{7/4})$ or $O(\sqrt{m})$ is not very useful for real-life computer-based implementation.

Also most of the existing estimates and results on maximum (and average) hitting time are based on the number of graph's vertices $n$ (see a survey of many such results in \cite{Lovasz}). In this article we have proved some upper bounds for maximum hitting time as functions of graph's number of edges $m$ and showed that most of these upper bounds are sharp. If graph is "sparse", which in our case means that $m = o(n^{3/2})$, then this type of upper bound will likely be better than the upper bounds based on $n$, such as a well-known theorem from \cite{BrWin} stating that the maximum hitting time is less than or equal to approximately $4n^3/27$. Among such graphs are sub-graphs of $k$-dimensional grid where $k$ is some "small" number, or generally any graphs with vertex degrees bounded from above by some fixed number which is sufficiently small compared to $n$.



\vspace{20pt}

\vspace{10pt}

\end{document}